\theoremstyle{plain}
\newtheorem{theorem}{Theorem}[section]
\newtheorem{lemma}{Lemma}[section]
\newtheorem{corollary}{Corollary}[section]
\newtheorem{proposition}{Proposition}[section]
\theoremstyle{remark}
\newtheorem{remark}{Remark}[section]
\theoremstyle{definition}
\newtheorem{example}{Example}[section]
\newcommand\id{\mathrm{id}}
\begin{document}

\title{Dual spaces vs. Haar measures of polynomial hypergroups}

\author{Stefan Kahler}
\address{Stefan Kahler, Fachgruppe Mathematik, RWTH Aachen University, Pontdriesch 14-16, 52062 Aachen, Germany; Lehrstuhl A f\"{u}r Mathematik, RWTH Aachen University, 52056 Aachen, Germany}
\email{kahler@mathematik.rwth-aachen.de}

\author{Ryszard Szwarc}
\address{Ryszard Szwarc, Institute of Mathematics, University of Wroc\l{}aw, pl. Grunwaldzki 2/4, 50-384 Wroc\l{}aw, Poland}
\email{szwarc2@gmail.com}

\thanks{The joint work was started while the first author (employed at RWTH Aachen University) was visiting the Institute of Mathematics of the University of Wroc\l{}aw, whose hospitality is greatly acknowledged. The authors thank Rupert Lasser who had a look at the manuscript and gave the feedback that he is not aware of further previous results which should be cited. The graphics were made with Maple.}

\date{\today}

\begin{abstract}
Many symmetric orthogonal polynomials $(P_n(x))_{n\in\mathbb{N}_0}$ induce a hypergroup structure on $\mathbb{N}_0$. The Haar measure is the counting measure weighted with $h(n):=1/\int_\mathbb{R}\!P_n^2(x)\,\mathrm{d}\mu(x)\geq1$, where $\mu$ denotes the orthogonalization measure. We observed that many naturally occurring examples satisfy the remarkable property $h(n)\geq2\;(n\in\mathbb{N})$. We give sufficient criteria and particularly show that $h(n)\geq2\;(n\in\mathbb{N})$ if the (Hermitian) dual space $\widehat{\mathbb{N}_0}$ equals the full interval $[-1,1]$, which is fulfilled by an abundance of examples. We also study the role of nonnegative linearization of products (and of the harmonic and functional analysis resulting from such expansions). Moreover, we construct two example types with $h(1)<2$. To our knowledge, these are the first such examples. The first type is based on Karlin--McGregor polynomials, and $\widehat{\mathbb{N}_0}$ consists of two intervals and can be chosen ``maximal'' in some sense; $h$ is of quadratic growth. The second type relies on certain compact operators; $h$ grows exponentially, and $\widehat{\mathbb{N}_0}$ is discrete.
\end{abstract}

\keywords{Hypergroups, orthogonal polynomials, Haar measure, dual space, polynomial expansions, polynomial estimates}

\subjclass[2020]{Primary 43A62; Secondary 28C10, 33C47, 46E30}

\maketitle

\numberwithin{equation}{section}

\section{Introduction}\label{sec:intro}

\subsection{Basic setting and observation}

Let $(P_n(x))_{n\in\mathbb{N}_0}\subseteq\mathbb{R}[x]$ with $\mathrm{deg}\;P_n(x)=n$ be given by some recurrence relation $P_0(x)=1$, $P_1(x)=x$,
\begin{equation}\label{eq:threetermrec}
x P_n(x)=a_n P_{n+1}(x)+c_n P_{n-1}(x)\;(n\in\mathbb{N}),
\end{equation}
where $(c_n)_{n\in\mathbb{N}}\subseteq(0,1)$ and $a_n\equiv1-c_n$; to avoid case differentiations, we additionally define $a_0:=1$. Obviously, the resulting polynomials are symmetric and normalized by $P_n(1)\equiv1$. It is well-known from the theory of orthogonal polynomials\footnote{Standard results on orthogonal polynomials can be found in \cite{Ch78}, for instance.} that $(P_n(x))_{n\in\mathbb{N}_0}$ is orthogonal w.r.t. a unique probability (Borel) measure $\mu$ on $\mathbb{R}$ which satisfies $|\mathrm{supp}\;\mu|=\infty$ and $\mathrm{supp}\;\mu\subseteq[-1,1]$ (Favard's theorem), which means
\begin{equation*}
\int_\mathbb{R}\!P_m(x)P_n(x)\,\mathrm{d}\mu(x)\neq0\Leftrightarrow m=n.
\end{equation*}
Moreover, it is well-known that the zeros of the polynomials are real, simple and located in the interior of the convex hull of $\mathrm{supp}\;\mu$. In particular, all $P_n$ are strictly positive at the right end point of $\mathrm{supp}\;\mu$. We are interested in sequences which satisfy the additional `nonnegative linearization of products' property
\begin{equation}\label{eq:productlinear}
P_m(x)P_n(x)=\sum_{k=0}^{m+n}\underbrace{g(m,n;k)}_{\overset{!}{\geq}0}P_k(x)\;(m,n\in\mathbb{N}_0),
\end{equation}
i.e., the product of any two polynomials $P_m(x),P_n(x)$ is a convex combination w.r.t. the basis $\{P_k(x):k\in\mathbb{N}_0\}$. Due to orthogonality, one has $g(m,n;|m-n|),g(m,n;m+n)\neq0$ and $g(m,n;k)=0$ for $k<|m-n|$, so the summation in \eqref{eq:productlinear} starts with $k=|m-n|$ (and \eqref{eq:productlinear} can be regarded as an extension of the recurrence \eqref{eq:threetermrec}) \cite{La05}. The nonnegativity of the linearization coefficients $g(m,n;k)$ gives rise to a commutative discrete hypergroup on $\mathbb{N}_0$, where the convolution $(m,n)\mapsto\sum_{k=|m-n|}^{m+n}g(m,n;k)\delta_k$ maps $\mathbb{N}_0\times\mathbb{N}_0$ into the convex hull of the Dirac functions on $\mathbb{N}_0$, the identity on $\mathbb{N}_0$ serves as involution and $0$ is the unit element.\footnote{The full hypergroup axioms can be found in standard literature like \cite{BH95}. The axioms for the special case of a discrete hypergroup are considerably simpler and can be found in \cite{La05}. Roughly speaking, hypergroups generalize locally compact groups by allowing the convolution of two Dirac measures to be no Dirac measure but a more general probability measure (satisfying certain non-degeneracy and compatibility properties).} Such hypergroups are called polynomial hypergroups, were introduced by Lasser in the 1980s and are generally very different from groups or semigroups \cite{La83}. There is an abundance of examples, and the individual behavior strongly depends on the underlying polynomials $(P_n(x))_{n\in\mathbb{N}_0}$. However, many concepts of harmonic analysis and functional analysis take a rather unified and concrete form. This makes these objects located at a fruitful and vivid crossing point between the theory of orthogonal polynomials and special functions, on the one hand, and functional and harmonic analysis, on the other hand. Recent publications deal with polynomial hypergroups vs. moment functions \cite{FGS22}, vs. amenability properties \cite{Ka21b,Ka23a}, vs. Ramsey theory \cite{KRS20} and vs. Gibbs states on graphs \cite{Vo23}. We briefly recall some basics \cite{La83,La05}. The nonnegativity of the $g(m,n;k)$ implies that
\begin{equation}\label{eq:consequencegelfand}
\{\pm1\}\cup\mathrm{supp}\;\mu\subseteq\widehat{\mathbb{N}_0}\subseteq[-1,1],
\end{equation}
where the compact set $\widehat{\mathbb{N}_0}$ is defined by
\begin{equation}\label{eq:dualdef}
\widehat{\mathbb{N}_0}:=\left\{x\in\mathbb{R}:\max_{n\in\mathbb{N}_0}|P_n(x)|=1\right\}.
\end{equation}
If $f:\mathbb{N}_0\rightarrow\mathbb{C}$ is an arbitrary function, then, for every $n\in\mathbb{N}_0$, the translation $T_n f:\mathbb{N}_0\rightarrow\mathbb{C}$ is given by
\begin{equation*}
T_n f(m)=\sum_{k=|m-n|}^{m+n}g(m,n;k)f(k);
\end{equation*}
the translation operator $T_n:\mathbb{C}^{\mathbb{N}_0}\rightarrow\mathbb{C}^{\mathbb{N}_0}$ is defined by $f\mapsto T_n f$. The corresponding Haar measure, normalized such that $\{0\}$ is mapped to $1$, is the counting measure on $\mathbb{N}_0$ weighted by the values of the Haar function $h:\mathbb{N}_0\rightarrow[1,\infty)$,
\begin{equation}\label{eq:hdef}
h(n):=\frac{1}{g(n,n;0)}=\frac{1}{\int_\mathbb{R}\!P_n^2(x)\,\mathrm{d}\mu(x)}.
\end{equation}
The orthonormal polynomials (with positive leading coefficients) $(p_n(x))_{n\in\mathbb{N}_0}$ which correspond to $(P_n(x))_{n\in\mathbb{N}_0}$ satisfy $p_n(x)=\sqrt{h(n)}P_n(x)\;(n\in\mathbb{N}_0)$ and are given by the recurrence relation $p_0(x)=1$, $p_1(x)=x/\sqrt{c_1}$,
\begin{equation*}
x p_n(x)=\alpha_{n+1}p_{n+1}(x)+\alpha_n p_{n-1}(x)\;(n\in\mathbb{N}),
\end{equation*}
where $\alpha_1=\sqrt{c_1}$ and
\begin{equation*}
\alpha_n=\sqrt{c_n a_{n-1}}=\sqrt{c_n(1-c_{n-1})}
\end{equation*}
for $n\geq2$. Moreover, the corresponding monic polynomials $(\sigma_n(x))_{n\in\mathbb{N}_0}$ fulfill $\sigma_0(x)=1$, $\sigma_1(x)=x$ and
\begin{equation}\label{eq:threetermrecmonic}
x\sigma_n(x)=\sigma_{n+1}(x)+\alpha_n^2\sigma_{n-1}(x)\;(n\in\mathbb{N}).
\end{equation}
If $f\in\ell^1(h):=\{f:\mathbb{N}_0\rightarrow\mathbb{C}:\left\|f\right\|_1<\infty\}$, $\left\|f\right\|_1:=\sum_{k=0}^\infty|f(k)|h(k)$, then $T_n f\in\ell^1(h)$ and
\begin{equation*}
\sum_{k=0}^{\infty}T_n f(k)h(k)=\sum_{k=0}^{\infty}f(k)h(k)
\end{equation*}
for every $n\in\mathbb{N}_0$. The norm $\left\|.\right\|_1$, the convolution $(f,g)\mapsto f\ast g$, $f\ast g(n):=\sum_{k=0}^\infty T_n f(k)g(k)h(k)$ and complex conjugation make $\ell^1(h)$ a semisimple commutative Banach $\ast$-algebra with unit $\delta_0$, so the polynomials $(P_n(x))_{n\in\mathbb{N}_0}$ can be studied via methods coming from Gelfand's theory. In particular, the important property \eqref{eq:consequencegelfand} is a consequence of functional analysis like Gelfand's theory. Polynomial hypergroups are accompanied by a sophisticated harmonic analysis and Fourier analysis. The orthogonalization measure $\mu$ serves as Plancherel measure, and $\widehat{\mathbb{N}_0}$ has an important interpretation as a dual object: Via the homeomorphism $\widehat{\mathbb{N}_0}\rightarrow\Delta_s(\ell^1(h))$, $x\mapsto\varphi_x$ with $\varphi_x(f):=\sum_{k=0}^\infty f(k)P_k(x)h(k)\;(f\in\ell^1(h))$, $\widehat{\mathbb{N}_0}$ can be identified with the Hermitian structure space $\Delta_s(\ell^1(h))$; the occurring element of $\ell^{\infty}$ which maps $k$ to $P_k(x)$ is called the symmetric character associated with $x\in\widehat{\mathbb{N}_0}$. If $h$ is of subexponential growth (i.e., for all $\epsilon>0$ there is some $M>0$ such that $h(n)\leq M(1+\epsilon)^n$ for all $n\in\mathbb{N}_0$), then $\mathrm{supp}\;\mu$ and the dual space $\widehat{\mathbb{N}_0}$ coincide \cite{Hu70,Sz95,Vo87,Vo88}. It is obvious from \eqref{eq:threetermrec} that $h$ is also given by
\begin{equation}\label{eq:hrec}
h(0)=1,\;h(n)=\prod_{k=1}^n\frac{a_{k-1}}{c_k}\;(n\in\mathbb{N}).
\end{equation}

Since $g(n,n;0)$ and $g(n,n;2n)$ are nonzero and $\sum_{k=0}^{2n}g(n,n;k)=1$, nonnegative linearization of products always implies that $h(n)=1/g(n,n;0)>1$ for all $n\in\mathbb{N}$. Studying various examples, we observed that all of them satisfied the stronger property $h(n)\geq2\;(n\in\mathbb{N})$. The paper is devoted to questions concerning this eye-catching observation, as well as to corresponding criteria, to the role of the dual space $\widehat{\mathbb{N}_0}$, to the role of nonnegative linearization of products (and of the harmonic/functional analysis resulting from such expansions) and to (counter) examples.

\subsection{Motivation and outline of the paper}

To start with, we give an additional and more detailed motivation for the problem: since the linearization coefficients $g(m,n;k)$ are often not explicitly known or of cumbersome structure, it may be very hard to check whether a concrete sequence $(P_n(x))_{n\in\mathbb{N}_0}$ satisfies the crucial nonnegative linearization of products property. We are not aware of any convenient characterization (in terms of the recurrence coefficients $(a_n)_{n\in\mathbb{N}}$ and $(c_n)_{n\in\mathbb{N}}$, in terms of the orthogonalization measure $\mu$ etc.). However, there are several sufficient criteria, starting with results of Askey \cite{As70} and continued by Szwarc et al. in a series of papers. One of these criteria \cite[Theorem 1 p. 966]{Sz92b} reads as follows:

\begin{theorem}\label{thm:szwarcnonnegII}
If $(c_n)_{n\in\mathbb{N}}$ is bounded from above by $1/2$ and both $(c_{2n-1})_{n\in\mathbb{N}}$ and $(c_{2n})_{n\in\mathbb{N}}$ are nondecreasing, then nonnegative linearization of products is satisfied.
\end{theorem}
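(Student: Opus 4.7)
The plan is to derive a three-term recurrence for the linearization coefficients $g(m,n;k)$ in the variable $m$ and then to proceed by induction. Computing $xP_m(x)P_n(x)$ in two ways---once by writing $xP_m=a_mP_{m+1}+c_mP_{m-1}$ and re-expanding $P_{m\pm 1}P_n$ via \eqref{eq:productlinear}, and once by multiplying $P_mP_n=\sum_k g(m,n;k)P_k$ through by $x$ and applying \eqref{eq:threetermrec} to each $P_k$---and then comparing coefficients of $P_k(x)$ yields
\begin{equation*}
a_m\,g(m+1,n;k)=a_{k-1}\,g(m,n;k-1)+c_{k+1}\,g(m,n;k+1)-c_m\,g(m-1,n;k),
\end{equation*}
with the conventions $g(-1,n;k)=0$ and $g(m,n;-1)=0$. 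Since $a_m=1-c_m>0$, this can be solved for $g(m+1,n;k)$. I would induct on $m$ with $n$ held fixed; the base cases $m=0$ (giving $g(0,n;k)=\delta_{n,k}$) and $m=1$ (giving $g(1,n;k)=a_n\delta_{k,n+1}+c_n\delta_{k,n-1}$) are trivially nonnegative.

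The real content of the induction is to show that the right-hand side of the displayed identity is nonnegative. The difficulty is the negative contribution $-c_m\,g(m-1,n;k)$: nonnegativity of the three individual terms is not enough, and one must show that the two positive terms dominate the negative one. A natural strategy is to strengthen the inductive hypothesis and prove, simultaneously with the nonnegativity of $g(m,n;k)$, the nonnegativity of a suitable auxiliary quantity such as
\begin{equation*}
A(m,n;k):=a_{k-1}\,g(m,n;k-1)-c_m\,g(m-1,n;k),
\end{equation*}
from which $a_m\,g(m+1,n;k)=A(m,n;k)+c_{k+1}\,g(m,n;k+1)\geq 0$ follows at once. A recurrence for $A$ can be obtained by applying the recurrence for $g$ to each of its two summands and regrouping, so that the propagation of nonnegativity of $A$ is controlled by algebraic inequalities between the coefficients $a_j,c_j$.

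The structural input that drives the closing of the induction is the parity constraint $g(m,n;k)=0$ unless $m+n-k\in 2\mathbb{N}_0$, which forces $m$ and $k$ to lie in parity classes determined by $n$. Within a fixed parity class, the hypothesis that $(c_{2n-1})_{n\in\mathbb{N}}$ and $(c_{2n})_{n\in\mathbb{N}}$ are both nondecreasing supplies comparisons such as $c_m\leq c_k$ or $c_{m-1}\leq c_{k-1}$ in the direction needed to offset $-c_m\,g(m-1,n;k)$, while the bound $c_n\leq 1/2$ gives $c_n\leq a_n$ and thereby controls the ratios appearing in the recurrence. The main obstacle, and the place where I would expect the most careful work, is choosing the right auxiliary quantity (possibly a more refined linear combination than $A$ above) so that the induction actually closes, together with the parity bookkeeping needed to verify that the hypotheses yield the required inequalities with the correct sign. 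This split according to the parity of $n$ is precisely what allows the even/odd monotonicity assumption to do its work in place of a stronger global monotonicity hypothesis on $(c_n)_{n\in\mathbb{N}}$.
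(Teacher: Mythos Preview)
The paper does not prove this theorem; it is quoted from Szwarc's article \cite{Sz92b} and used as an input. So there is no ``paper's own proof'' to compare against---any comparison must be to the original argument in \cite{Sz92b} (and the closely related \cite{Sz92c}, \cite{Sz94b}).

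Your outline is on the right track and matches the spirit of Szwarc's method: the recurrence
\[
a_m\,g(m+1,n;k)=a_{k-1}\,g(m,n;k-1)+c_{k+1}\,g(m,n;k+1)-c_m\,g(m-1,n;k)
\]
is correct, and the idea of strengthening the induction by carrying along an auxiliary nonnegative quantity is exactly how the original proof goes. The parity observation is also the right structural point.

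That said, what you have written is a plan, not a proof. The whole content of the theorem lies in the step you explicitly flag as unfinished: identifying an auxiliary quantity whose nonnegativity propagates. Your candidate $A(m,n;k)=a_{k-1}g(m,n;k-1)-c_m g(m-1,n;k)$ does not obviously satisfy a closed recurrence with nonnegative coefficients under the stated hypotheses; when you expand it you will find cross-terms that require a second auxiliary quantity (roughly, one also needs control of $c_{k+1}g(m,n;k+1)-c_m g(m-1,n;k)$ or an equivalent combination), and the two must be handled simultaneously. In Szwarc's argument this is organized via a boundary-value problem for a discrete hyperbolic operator (a maximum principle on a triangular lattice), which is what makes the parity-wise monotonicity of $(c_n)$ together with $c_n\le 1/2$ exactly sufficient. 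Until you either reproduce that operator-theoretic step or exhibit a concrete pair of auxiliary quantities and verify the induction closes, the proposal has a genuine gap at its central point.
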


Now if $(c_n)_{n\in\mathbb{N}}$ is bounded from above by $1/2$ (and thus $(a_n)_{n\in\mathbb{N}}$ is bounded from below by $1/2$) like in Theorem~\ref{thm:szwarcnonnegII}, then it is clear from \eqref{eq:hrec} that indeed $h(n)\geq2$ for all $n\in\mathbb{N}$ (recall that $a_0=1$). Therefore, it is at least not surprising that many examples satisfy this property because many examples are either constructed via Theorem~\ref{thm:szwarcnonnegII} or satisfy $c_n\leq1/2$ for other reasons. Recently, Kahler successfully applied Theorem~\ref{thm:szwarcnonnegII} to the large class of associated symmetric Pollaczek polynomials (with monotonicity of the whole sequence $(c_n)_{n\in\mathbb{N}}$), which is a two-parameter generalization of the well-known ultraspherical polynomials \cite{Ka21b}.\\

In \cite{Ka21a}, Kahler recently found the following example which, for certain choices of the parameters, satisfies nonnegative linearization of products without fulfilling the conditions of Theorem~\ref{thm:szwarcnonnegII}: for any $\alpha,\beta>-1$, let the sequence of generalized Chebyshev polynomials $(T_n^{(\alpha,\beta)}(x))_{n\in\mathbb{N}_0}\subseteq\mathbb{R}[x]$ be given by
\begin{equation*}
c_{2n-1}=\frac{n+\beta}{2n+\alpha+\beta},\;c_{2n}=\frac{n}{2n+\alpha+\beta+1}.
\end{equation*}
These polynomials are the quadratic transformations of the Jacobi polynomials and orthogonal w.r.t. $\mathrm{d}\mu(x)=\Gamma(\alpha+\beta+2)/(\Gamma(\alpha+1)\Gamma(\beta+1))\cdot(1-x^2)^{\alpha}|x|^{2\beta+1}\chi_{(-1,1)}(x)\,\mathrm{d}x$ (so $[-1,1]=\mathrm{supp}\;\mu=\widehat{\mathbb{N}_0}$) \cite[Chapter V 2 (G)]{Ch78} \cite[3 (f)]{La83}; for $\beta=-1/2$, one obtains the ultraspherical polynomials, including the Legendre polynomials and the Chebyshev polynomials of the first and second kind. The generalized Chebyshev polynomials are of particular interest concerning product formulas and duality structures \cite{La80,La83,Ob17}. In \cite[Theorem 3.2]{Ka21a}, Kahler showed that $(T_n^{(\alpha,\beta)}(x))_{n\in\mathbb{N}_0}$ satisfies nonnegative linearization of products if and only if $(\alpha,\beta)$ is an element of the set $V\subseteq[-1/2,\infty)\times(-1,\infty)$ given by
\begin{equation}\label{eq:Voriginal}
V:=\left\{(\alpha,\beta)\in(-1,\infty)^2:\alpha\geq\beta,a(a+5)(a+3)^2\geq(a^2-7a-24)b^2\right\},
\end{equation}
where $a:=\alpha+\beta+1$ and $b:=\alpha-\beta$.\footnote{This is the analogue to a well-known result of Gasper on the (nonsymmetric) class of Jacobi polynomials \cite[Theorem 1]{Ga70b}.} However, the conditions of Theorem~\ref{thm:szwarcnonnegII} are satisfied if and only if $\alpha\geq\beta$ and $\alpha+\beta+1\geq0$. If $(\alpha,\beta)\in V$ but $\alpha+\beta+1<0$,\footnote{Such pairs $(\alpha,\beta)$ exist, cf. Figure~\ref{fig:specialregiongencheb}; for instance, $(\alpha,\beta)=(-1/4,-5/6)$ has these properties.} then $(c_{2n})_{n\in\mathbb{N}}$ is strictly decreasing and always greater than $1/2$.\\

\begin{figure}
\centering
\includegraphics[width=0.60\textwidth]{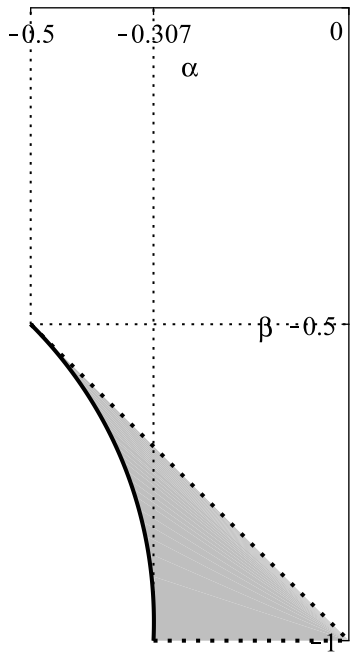}
\caption{The set $\{(\alpha,\beta)\in V:\alpha+\beta+1<0\}$, cf. \eqref{eq:Voriginal}.}\label{fig:specialregiongencheb}
\end{figure}

Nevertheless, the explicit formulas
\begin{align}
\label{eq:genchebhodd} h(2n-1)&=\frac{(\alpha+\beta+2)_{n-1}}{(\beta+1)_n}\cdot\frac{(2n+\alpha+\beta)(\alpha+1)_{n-1}}{(n-1)!}\\
\notag &=\left(2+\frac{\alpha-\beta}{n+\beta}\right)\prod_{k=0}^{n-2}\left[1+\frac{(2\alpha+1)k+\alpha^2+\alpha\beta+3\alpha+1}{(k+\beta+1)(k+1)}\right],\\
\label{eq:genchebheven} h(2n)&=\frac{(\alpha+\beta+2)_{n-1}}{(\beta+1)_n}\cdot\frac{(2n+\alpha+\beta+1)(\alpha+1)_n}{n!}\\
\notag &=\left(2+\frac{\alpha-\beta+\frac{(2\alpha+1)(n-1)+\alpha^2+\alpha\beta+3\alpha+1}{n}}{n+\beta}\right)\\
\notag &\quad\times\prod_{k=0}^{n-2}\left[1+\frac{(2\alpha+1)k+\alpha^2+\alpha\beta+3\alpha+1}{(k+\beta+1)(k+1)}\right]\;(n\in\mathbb{N})
\end{align}
\cite[3 (f)]{La83} and the estimation
\begin{equation}\label{eq:genchebhest}
\alpha^2+\alpha\beta+3\alpha+1\geq0
\end{equation}
show that $h(n)\geq2$ for all $n\in\mathbb{N}$. \eqref{eq:genchebhest} can be seen as follows: if $\alpha\geq0$, then
\begin{equation*}
\alpha^2+\alpha\beta+3\alpha+1\geq\alpha^2-\alpha+3\alpha+1=(\alpha+1)^2>0,
\end{equation*}
and if $\alpha<0$, then
\begin{equation*}
\alpha^2+\alpha\beta+3\alpha+1\geq\alpha^2+\alpha^2+3\alpha+1=(2\alpha+1)(\alpha+1)\geq0.
\end{equation*}
These observations yield the questions whether $h(n)\geq2\;(n\in\mathbb{N})$ is true for \textit{every} sequence $(P_n(x))_{n\in\mathbb{N}_0}$ which satisfies nonnegative linearization of products and whether maximal dual spaces $\widehat{\mathbb{N}_0}=[-1,1]$ (as satisfied by the generalized Chebyshev polynomials) play a more general role. In Section~\ref{sec:sufficient}, we give sufficient criteria (Theorem~\ref{thm:greaterthantwo}, which is indeed based on maximal dual spaces $\widehat{\mathbb{N}_0}=[-1,1]$, and the subsequent corollaries) which cover many naturally occurring examples, including the generalized Chebyshev polynomials (also those with $\alpha+\beta+1<0$ considered above). Concerning these criteria, we will discuss the role of nonnegative linearization of products, and we will consider the examples of cosh-polynomials and Grinspun polynomials. Moreover, in Section~\ref{sec:karlinmcgregor} we show that there are also counterexamples (Theorem~\ref{thm:smallerthantwo}). To our knowledge, these are the first examples with $h(1)<2$. It is easy to see that all such examples must fulfill $0\notin\widehat{\mathbb{N}_0}$. For every $\epsilon\in(0,1)$, we will construct polynomial hypergroups with $h(1)=1+\epsilon$. Our first corresponding example type (Theorem~\ref{thm:smallerthanoneplusepsilon}) relies on Karlin--McGregor polynomials, belongs to the class of Geronimus polynomials and has dual spaces of the form $\widehat{\mathbb{N}_0}=[-1,-1+\delta]\cup[1-\delta,1]$ with $\delta\in(0,1-\sqrt{(1-\epsilon)/(1+\epsilon)}]$; we will give explicit formulas for the corresponding orthogonalization measures, recurrence coefficients and Haar measures, and we will obtain an optimality result concerning the arising dual spaces. The problem under consideration is also interesting for the following reason: for the well-known Chebyshev polynomials of the first kind, which play a fundamental role in asymptotics and optimization, $h(n)$ \textit{equals} $2$ for all $n\in\mathbb{N}$. Hence, our results show that under a large class of naturally occurring examples the Chebyshev polynomials of the first kind are optimal w.r.t. minimizing the Haar function---however, they are not optimal among all possible examples. In Theorem~\ref{thm:smallerthanoneplusepsilon2}, we will obtain a second example type with $h(1)=1+\epsilon$; these examples do not rely on the Karlin--McGregor polynomials but on certain compact operators and convex sequences. They have discrete dual spaces of the form $\widehat{\mathbb{N}_0}=\{\pm1\}\cup\{\pm x_n:n\in\mathbb{N}\}$ with strictly increasing sequences $(x_n)_{n\in\mathbb{N}}\subseteq[\sqrt{(1-\epsilon)/(1+\epsilon)},1)$ with $\lim_{n\to\infty}x_n=1$, and they are of particular interest because $h$ grows exponentially but nevertheless the two dual objects $\widehat{\mathbb{N}_0}$ and $\mathrm{supp}\;\mu$ coincide. Section~\ref{sec:dual} is devoted to some additional notes on the dual space concerning a third dual object $\mathcal{X}^b(\mathbb{N}_0)$, which is homeomorphic to the (full) structure space $\Delta(\ell^1(h))$. Finally, Section~\ref{sec:openproblems} is devoted to some open problems.\\

We remark that we used computer algebra systems (Maple) to find suitable decompositions of long expressions, find explicit formulas, get conjectures and so on. The final proofs can be understood without any computer usage, however.

\section{Sufficient criteria for $h(n)\geq2\;(n\in\mathbb{N})$, the role of the dual space and the role of nonnegative linearization of products}\label{sec:sufficient}

In this section, we give some sufficient criteria for $h(n)\geq2\;(n\in\mathbb{N})$. They do not rely on boundedness properties of $(c_n)_{n\in\mathbb{N}}$, and they particularly cover examples where $(c_n)_{n\in\mathbb{N}}$ exceeds $1/2$ as considered in Section~\ref{sec:intro}. The dual space $\widehat{\mathbb{N}_0}$ will play a crucial role. Our approach is based on the connection coefficients to the Chebyshev polynomials of the first kind $(T_n(x))_{n\in\mathbb{N}_0}$: given an orthogonal polynomial sequence $(P_n(x))_{n\in\mathbb{N}_0}$ as in Section~\ref{sec:intro}, let $C_n(0),\ldots,C_n(n)$ be defined by the expansions
\begin{equation*}
P_n(x)=\sum_{k=0}^n C_n(k)T_k(x),
\end{equation*}
where $T_0(x)=1$, $T_1(x)=x$ and
\begin{equation}\label{eq:threetermreccheb}
x T_n(x)=\frac{1}{2}T_{n+1}(x)+\frac{1}{2}T_{n-1}(x)\;(n\in\mathbb{N})
\end{equation}
or, equivalently, $T_n(\cos(\varphi))=\cos(n\varphi)$. It is clear that $C_n(n)\neq0$. Recall that the Chebyshev polynomials of the first kind satisfy nonnegative linearization of products and $h(n)=2\;(n\in\mathbb{N})$; the orthogonalization measure $\mu$ is absolutely continuous (w.r.t. the Lebesgue--Borel measure on $\mathbb{R}$) and satisfies $\mathrm{d}\mu(x)=1/\pi\cdot(1-x^2)^{-1/2}\chi_{(-1,1)}(x)\,\mathrm{d}x$ \cite[Sect. 6]{La05}. We need the following classical estimation result from Chebyshev theory \cite[Theorem (3.1)]{To63}:

\begin{lemma}\label{lma:greaterthantwo}
Let $P(x)\in\mathbb{R}[x]$ be a polynomial of degree $n\in\mathbb{N}$ with leading coefficient $1$. Then
\begin{equation*}
\max_{x\in[-1,1]}|P(x)|\geq\frac{1}{2^{n-1}},
\end{equation*}
and equality holds if and only if $P(x)=T_n(x)/2^{n-1}$.
\end{lemma}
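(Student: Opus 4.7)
The plan is to prove this via the classical Chebyshev alternation argument. The starting observation is that the monic Chebyshev polynomial $T_n(x)/2^{n-1}$ (note that $T_n$ has leading coefficient $2^{n-1}$, as follows inductively from the recurrence \eqref{eq:threetermreccheb}) achieves the bound: using $T_n(\cos\varphi)=\cos(n\varphi)$, the sup-norm of $T_n(x)/2^{n-1}$ on $[-1,1]$ is exactly $1/2^{n-1}$, and it is attained with alternating signs at the $n+1$ \emph{extremal points} $x_k:=\cos(k\pi/n)$, $k=0,1,\ldots,n$, i.e.\ $T_n(x_k)/2^{n-1}=(-1)^k/2^{n-1}$.

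Next I would suppose $P(x)$ is any monic polynomial of degree $n$ satisfying $\max_{x\in[-1,1]}|P(x)|\leq 1/2^{n-1}$ and consider the difference
\begin{equation*}
Q(x):=\frac{T_n(x)}{2^{n-1}}-P(x).
\end{equation*}
Because both summands are monic of degree $n$, $Q$ has degree at most $n-1$. Evaluating at the extremal points yields
\begin{equation*}
Q(x_k)=\frac{(-1)^k}{2^{n-1}}-P(x_k),
\end{equation*}
and the assumption $|P(x_k)|\leq 1/2^{n-1}$ forces $(-1)^k Q(x_k)\geq 0$ for each $k=0,1,\ldots,n$.

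The next step is the zero count: the sign-alternation of $Q$ at $n+1$ consecutive points $x_0>x_1>\cdots>x_n$ produces at least $n$ zeros of $Q$ in $[-1,1]$. Since $\deg Q\leq n-1$, this forces $Q\equiv 0$, i.e.\ $P(x)=T_n(x)/2^{n-1}$. This simultaneously establishes the inequality (if $\max|P|<1/2^{n-1}$ strictly, the same conclusion $P=T_n/2^{n-1}$ yields a contradiction because the latter has sup-norm exactly $1/2^{n-1}$) and the equality characterization.

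The only delicate point, and the main bookkeeping obstacle, is making the ``at least $n$ zeros'' step rigorous when some $Q(x_k)$ actually vanish rather than being strictly alternating. I would handle this by grouping consecutive extremal points: between any two strict sign changes one obtains an interior zero by the intermediate value theorem, while each $x_k$ at which $Q(x_k)=0$ is itself a zero and, when it separates two nonzero values of the same sign, must be a zero of order at least two (else $Q$ would change sign there, contradicting the weak alternation on the neighboring extrema). A careful case analysis shows that the total multiplicity is at least $n$ in all situations, which is enough to force $Q\equiv 0$ and close the argument.
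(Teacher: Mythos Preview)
The paper does not prove this lemma at all; it simply quotes it as a classical result from Chebyshev theory, citing \cite[Theorem (3.1)]{To63}. Your proposal supplies the standard alternation proof, which is correct in outline and is exactly the argument one finds in the cited literature.

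One minor imprecision in your equality-case bookkeeping: it is not literally true that a zero $x_j$ of $Q$ lying between two extremal points with nonzero values of the same sign must itself have multiplicity at least two. It could perfectly well be a simple zero, with $Q$ changing sign at $x_j$ and then changing sign again at some other point of $(x_{j+1},x_{j-1})$ before reaching $x_{j\pm1}$. Either way, however, the interval $[x_{j+1},x_{j-1}]$ contributes at least two zeros (counted with multiplicity) to $Q$, so the total count still reaches $n$, and the conclusion $Q\equiv0$ follows. Since you already flag this as ``the main bookkeeping obstacle'' and defer to a careful case analysis, the overall argument stands.
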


In the following, we always assume that $(P_n(x))_{n\in\mathbb{N}_0}$ satisfies nonnegative linearization of products. The following theorem is the central result of this section.

\begin{theorem}\label{thm:greaterthantwo}
Let the dual space $\widehat{\mathbb{N}_0}$ coincide with the full interval $[-1,1]$. Then $h(n)\geq2$ for all $n\in\mathbb{N}$.
\end{theorem}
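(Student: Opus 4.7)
The plan is to analyse $P_n$ through its Chebyshev expansion $P_n(x)=\sum_{k=0}^n C_n(k)T_k(x)$, introduced just before the theorem, and to exploit the extremality supplied by Lemma~\ref{lma:greaterthantwo}. The first step is to identify the leading coefficient of $P_n$ from the recurrence \eqref{eq:threetermrec}: iterating $k_{m+1}=k_m/a_m$ with $k_0=k_1=1$ gives $k_n=(a_1a_2\cdots a_{n-1})^{-1}$ for $n\geq 1$, and matching with the leading coefficient $2^{n-1}$ of $T_n$ yields $C_n(n)=k_n/2^{n-1}$.

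The second step invokes the hypothesis. By \eqref{eq:dualdef}, $\widehat{\mathbb{N}_0}=[-1,1]$ is equivalent to $|P_n(x)|\leq 1$ for every $n\in\mathbb{N}_0$ and every $x\in[-1,1]$; combined with $P_n(1)=1$ this gives $\max_{x\in[-1,1]}|P_n(x)|=1$. Passing to the monic polynomial $\sigma_n:=P_n/k_n$ therefore yields $\max_{[-1,1]}|\sigma_n|=1/k_n$, and Lemma~\ref{lma:greaterthantwo} forces $1/k_n\geq 1/2^{n-1}$, i.e.\
\[
\prod_{j=1}^{n-1}a_j\;\geq\;\frac{1}{2^{n-1}},
\]
with equality precisely in the Chebyshev case $P_n=T_n$.

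The remaining, and most delicate, step is to upgrade this leading-coefficient bound into the target estimate $h(n)=\prod_{j=1}^{n-1}a_j/\prod_{j=1}^{n}c_j\geq 2$. The naive $L^\infty$-bound $\int P_n^2\,\mathrm{d}\mu\leq(\max_{[-1,1]}|P_n|)^2=1$ only yields $h(n)\geq 1$, so the additional factor of two must be extracted from the Chebyshev structure. The most natural mechanism is the Chebyshev-side Parseval identity $\int P_n^2\,\mathrm{d}\mu_T=C_n(0)^2+\tfrac12\sum_{k\geq 1}C_n(k)^2$, whose built-in factor $\tfrac12$ is precisely what is required. The hard part will be transferring this Chebyshev-measure estimate to the true orthogonalization measure $\mu$: I expect this to rest either on a positivity statement $C_n(k)\geq 0$ (from which $\sum_k C_n(k)^2\leq(\sum_k C_n(k))^2=P_n(1)^2=1$, with the positivity hopefully inherited from nonnegative linearization of products together with the hypergroup interpretation of $\widehat{\mathbb{N}_0}=[-1,1]$), or on a symmetry argument that bounds $\int P_n^2\,\mathrm{d}\mu$ by the moments $\int T_i\,\mathrm{d}\mu$ and exploits the orthogonality of $P_n$ against $1=T_0$.
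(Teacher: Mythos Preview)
Your first two steps match the paper exactly: from $\widehat{\mathbb{N}_0}=[-1,1]$ one gets $\max_{[-1,1]}|P_n|=1$, and Lemma~\ref{lma:greaterthantwo} applied to the monic version yields
\[
\prod_{j=1}^{n-1}a_j\;\geq\;\frac{1}{2^{\,n-1}}.
\]
The gap is in your third step. Neither of the two mechanisms you propose is available under the stated hypotheses. The Parseval identity $\int P_n^2\,\mathrm{d}\mu_T=C_n(0)^2+\tfrac12\sum_{k\ge1}C_n(k)^2$ lives on the Chebyshev measure $\mu_T$, not on $\mu$, and there is no comparison of $\mu$ with $\mu_T$ at hand. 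And positivity of all $C_n(k)$ is \emph{not} a consequence of $\widehat{\mathbb{N}_0}=[-1,1]$ (nor of nonnegative linearization of products); in the paper this positivity is treated as a separate, strictly stronger hypothesis (Corollary~\ref{cor:greaterthantwoconcoeff}). So at present your argument stops at the leading-coefficient bound.

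The missing idea is purely algebraic and uses only \eqref{eq:hrec} together with the elementary inequality $c_k(1-c_k)\le\tfrac14$. Rewrite
\[
h(n)\;=\;\frac{1}{c_n}\prod_{k=1}^{n-1}\frac{a_k}{c_k}
\;=\;\frac{1}{c_n}\prod_{k=1}^{n-1}\frac{a_k^{2}}{c_k(1-c_k)}
\;\ge\;\frac{1}{c_n}\cdot 4^{\,n-1}\prod_{k=1}^{n-1}a_k^{2}
\;\ge\;\frac{1}{c_n},
\]
the last step being exactly your leading-coefficient estimate squared. Thus $c_n h(n)\ge1$ for every $n\in\mathbb{N}$ (with equality at $n=1$). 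Since $h(n+1)=h(n)\,a_n/c_{n+1}$, the same inequality at $n+1$ gives $a_n h(n)=c_{n+1}h(n+1)\ge1$. Adding the two,
\[
h(n)\;=\;(a_n+c_n)\,h(n)\;=\;a_n h(n)+c_n h(n)\;\ge\;2.
\]
No information about $\mu$ beyond the Chebyshev extremal bound, and no sign information on the $C_n(k)$, is needed.
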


\begin{proof}
Let $n\in\mathbb{N}\backslash\{1\}$ and expand $P_n(x)=\sum_{k=0}^n C_n(k)T_k(x)$. Since $\widehat{\mathbb{N}_0}=[-1,1]$, by Lemma~\ref{lma:greaterthantwo} we have
\begin{equation*}
1=\max_{x\in[-1,1]}|P_n(x)|=C_n(n)\max_{x\in[-1,1]}\left|\sum_{k=0}^n\frac{C_n(k)}{C_n(n)}T_k(x)\right|\geq C_n(n).
\end{equation*}
Since the leading coefficient of $P_n(x)$ is $1/\prod_{k=1}^{n-1}a_k$ and the leading coefficient of $T_n(x)$ is $2^{n-1}$, we get
\begin{equation*}
\frac{1}{\prod_{k=1}^{n-1}a_k}=C_n(n)\cdot2^{n-1}\leq2^{n-1}
\end{equation*}
and consequently
\begin{equation*}
4^{n-1}\prod_{k=1}^{n-1}a_k^2\geq1.
\end{equation*}
Using \eqref{eq:hrec}, we have
\begin{equation*}
h(n)=\frac{1}{c_1}\prod_{k=2}^n\frac{a_{k-1}}{c_k}=\frac{1}{c_n}\prod_{k=1}^{n-1}\frac{a_k}{c_k}=\frac{1}{c_n}\prod_{k=1}^{n-1}\frac{a_k^2}{c_k(1-c_k)}.
\end{equation*}
Since $c_k(1-c_k)\leq1/4$ for all $k\in\{1,\ldots,n-1\}$, we now obtain
\begin{equation*}
h(n)\geq\frac{1}{c_n}\cdot4^{n-1}\prod_{k=1}^{n-1}a_k^2\geq\frac{1}{c_n}.
\end{equation*}
Therefore, for every $n\in\mathbb{N}$ we have both
\begin{equation*}
1\leq c_n h(n)
\end{equation*}
(with equality for $n=1$) and
\begin{equation*}
1\leq c_{n+1}h(n+1)=a_n h(n),
\end{equation*}
so
\begin{equation*}
2\leq c_n h(n)+a_n h(n)=h(n).
\end{equation*}
\end{proof}

The essential condition $\widehat{\mathbb{N}_0}=[-1,1]$ in Theorem~\ref{thm:greaterthantwo} is fulfilled by an abundance of examples (see \cite{BH95,La83,La94,La05}, for instance). Some examples will be discussed later. We first give several corollaries.

\begin{corollary}\label{cor:greaterthantwoconcoeff}
If all connection coefficients $C_n(0),\ldots,C_n(n)$ are nonnegative, then $h(n)\geq2$ for all $n\in\mathbb{N}$.
\end{corollary}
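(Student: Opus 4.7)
The plan is to derive Corollary~\ref{cor:greaterthantwoconcoeff} directly from Theorem~\ref{thm:greaterthantwo} by showing that nonnegativity of the connection coefficients $C_n(k)$ already forces $\widehat{\mathbb{N}_0}=[-1,1]$, after which the theorem applies without further work.

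The key observation I would exploit first is that $P_n(1)=1$ together with $T_k(1)=1$ for all $k$ yields $\sum_{k=0}^n C_n(k)=1$, so under the hypothesis the $C_n(k)$ form a convex combination. Combined with the standard bound $|T_k(x)|\leq 1$ on $[-1,1]$, this gives
\[
|P_n(x)|\leq\sum_{k=0}^n C_n(k)|T_k(x)|\leq\sum_{k=0}^n C_n(k)=1
\]
for every $x\in[-1,1]$ and every $n\in\mathbb{N}_0$. Since $P_0\equiv 1$ attains this bound, it follows that $\max_{n\in\mathbb{N}_0}|P_n(x)|=1$ on $[-1,1]$, i.e., $[-1,1]\subseteq\widehat{\mathbb{N}_0}$. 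The reverse inclusion is part of \eqref{eq:consequencegelfand}, so $\widehat{\mathbb{N}_0}=[-1,1]$ and Theorem~\ref{thm:greaterthantwo} gives the desired conclusion.

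There is essentially no obstacle; the whole argument is a one-line normalization coupled with an elementary estimate, and the substance of the result is already contained in Theorem~\ref{thm:greaterthantwo}. If one preferred to avoid invoking that theorem as a black box, exactly the same proof goes through with $\max_{x\in[-1,1]}|P_n(x)|=1$ replaced by $\max_{x\in[-1,1]}|P_n(x)|\leq 1$: Lemma~\ref{lma:greaterthantwo} applied to $P_n(x)/C_n(n)$ still yields $C_n(n)\leq 1$, and the subsequent manipulation of the recurrence coefficients $a_n$, $c_n$ via \eqref{eq:hrec} together with $c_k(1-c_k)\leq 1/4$ is unchanged.
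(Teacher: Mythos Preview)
Your proposal is correct and matches the paper's own proof essentially verbatim: the paper also notes that the $C_n(k)$ sum to $1$, deduces $\widehat{\mathbb{N}_0}=[-1,1]$ and invokes Theorem~\ref{thm:greaterthantwo}, and likewise mentions the alternative of using $C_n(n)\leq1$ directly to bypass Lemma~\ref{lma:greaterthantwo}.
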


\begin{proof}
As the connection coefficients $C_n(0),\ldots,C_n(n)$ sum up to $1$, the presumed nonnegativity allows to conclude in two ways: either obtain that $\widehat{\mathbb{N}_0}=[-1,1]$ as an immediate consequence and apply Theorem~\ref{thm:greaterthantwo}, or just use that the assumption particularly yields $C_n(n)\leq1$ and proceed as in the proof of Theorem~\ref{thm:greaterthantwo} above; the latter way avoids Lemma~\ref{lma:greaterthantwo}.
\end{proof}

\begin{corollary}\label{cor:bound}
If there exists a function $g:[-1,1]\rightarrow[0,\infty)$ such that $|P_n(x)|\leq g(x)$ for all $x\in[-1,1]$ and for all $n\in\mathbb{N}_0$, then $h(n)\geq2$ for all $n\in\mathbb{N}$.
\end{corollary}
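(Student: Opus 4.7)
The plan is to show that the hypothesis forces the dual space to be maximal, i.e. $\widehat{\mathbb{N}_0}=[-1,1]$, so that Theorem~\ref{thm:greaterthantwo} applies directly. By \eqref{eq:consequencegelfand} we already have $\widehat{\mathbb{N}_0}\subseteq[-1,1]$, so the task reduces to proving that every $x\in[-1,1]$ belongs to $\widehat{\mathbb{N}_0}$. Since $P_0\equiv1$, we have $\max_{n}|P_n(x)|\geq1$, hence $x\in\widehat{\mathbb{N}_0}$ is equivalent to $|P_n(x)|\leq1$ for all $n\in\mathbb{N}_0$.

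The key observation is that pointwise boundedness of $(P_n(x_0))_{n\in\mathbb{N}_0}$ already forces the stronger estimate $\sup_n|P_n(x_0)|\leq1$. This is where nonnegative linearization enters. Fix $x_0\in[-1,1]$ and set $M:=\sup_n|P_n(x_0)|$; by hypothesis $M\leq g(x_0)<\infty$. For arbitrary $m,n\in\mathbb{N}_0$, the linearization identity \eqref{eq:productlinear} together with $g(m,n;k)\geq0$ and $\sum_k g(m,n;k)=1$ (evaluate \eqref{eq:productlinear} at $x=1$) gives
\begin{equation*}
|P_m(x_0)|\,|P_n(x_0)|=\left|\sum_{k=|m-n|}^{m+n}g(m,n;k)P_k(x_0)\right|\leq M\sum_{k=|m-n|}^{m+n}g(m,n;k)=M.
\end{equation*}
Taking $m=n$ and then the supremum over $n$ yields $M^2\leq M$, so $M\leq1$.

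Applying this to every $x\in[-1,1]$ shows $[-1,1]\subseteq\widehat{\mathbb{N}_0}$, hence $\widehat{\mathbb{N}_0}=[-1,1]$, and the conclusion $h(n)\geq2\;(n\in\mathbb{N})$ follows from Theorem~\ref{thm:greaterthantwo}. There is no real obstacle here; the only slightly subtle point is recognizing that mere (possibly unbounded in $x$) pointwise boundedness of the polynomial sequence is strong enough to pin down the symmetric character values to the unit disk, which is precisely what the submultiplicative trick above provides.
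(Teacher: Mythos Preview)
Your proof is correct and follows the same overall strategy as the paper: establish $\widehat{\mathbb{N}_0}=[-1,1]$ and then invoke Theorem~\ref{thm:greaterthantwo}. The difference is in how the crucial implication ``pointwise boundedness $\Rightarrow$ bounded by $1$'' is obtained. The paper simply quotes the general identity
\[
\left\{x\in\mathbb{R}:\sup_{n\in\mathbb{N}_0}|P_n(x)|<\infty\right\}=\widehat{\mathbb{N}_0}
\]
from the harmonic-analytic literature on polynomial hypergroups, whereas you supply a short self-contained proof: the nonnegative linearization together with $\sum_k g(m,n;k)=1$ gives $|P_m(x_0)P_n(x_0)|\leq M$, and taking $m=n$ yields $M^2\leq M$, hence $M\leq1$ (using $M\geq|P_0(x_0)|=1>0$). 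Your argument is more elementary and makes the corollary independent of the cited black box; the paper's version, in turn, highlights that this step is a general structural feature of polynomial hypergroups rather than an ad hoc trick.
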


\begin{proof}
It is a general result on polynomial hypergroups and their harmonic/functional analysis that the existence of such a function $g$ implies that $(P_n(x))_{n\in\mathbb{N}_0}$ is uniformly bounded on $[-1,1]$ by $\pm1$; one always has
\begin{equation}\label{eq:dualrelation}
\left\{x\in\mathbb{R}:\sup_{n\in\mathbb{N}_0}|P_n(x)|<\infty\right\}=\widehat{\mathbb{N}_0}
\end{equation}
\cite{La05}. Now Theorem~\ref{thm:greaterthantwo} yields the assertion.
\end{proof}

\begin{corollary}\label{cor:support}
If $\mathrm{supp}\;\mu=[-a,a]$ for some $a\in(0,1]$, then $h(n)\geq2$ for all $n\in\mathbb{N}$.
\end{corollary}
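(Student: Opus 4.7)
The plan is to reduce Corollary~\ref{cor:support} to Theorem~\ref{thm:greaterthantwo} by rescaling the underlying polynomial system so that the orthogonalization measure is supported on the full interval $[-1,1]$, and then track how this rescaling affects the Haar function. Since by assumption $\mathrm{supp}\;\mu=[-a,a]\subseteq\widehat{\mathbb{N}_0}$, the right endpoint $a$ lies in $\widehat{\mathbb{N}_0}$, which by the definition \eqref{eq:dualdef} yields the crucial estimate $|P_n(a)|\leq 1$ for every $n\in\mathbb{N}_0$. Moreover, since all zeros of $P_n$ lie in the interior of $\mathrm{supp}\;\mu=[-a,a]$, we have $P_n(a)>0$ for every $n$, so in fact $P_n(a)\in(0,1]$.

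Define $Q_n(x):=P_n(ax)/P_n(a)$. I would first check that the sequence $(Q_n(x))_{n\in\mathbb{N}_0}$ satisfies all hypotheses of Theorem~\ref{thm:greaterthantwo}. Symmetry is immediate from the symmetry of $(P_n)$. Substituting $ax$ into \eqref{eq:threetermrec} and dividing by $P_n(a)$, the identity $a P_n(a)=a_n P_{n+1}(a)+c_n P_{n-1}(a)$ shows that $(Q_n)$ satisfies a recurrence of the form \eqref{eq:threetermrec} with coefficients $\tilde{c}_n=c_n P_{n-1}(a)/(a P_n(a))\in(0,1)$ and $\tilde{a}_n=1-\tilde{c}_n$; in particular $Q_n(1)=1$. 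Nonnegative linearization of products transfers because, for any $m,n$,
\begin{equation*}
Q_m(x)Q_n(x)=\sum_{k=|m-n|}^{m+n} g(m,n;k)\,\frac{P_k(a)}{P_m(a)P_n(a)}\,Q_k(x),
\end{equation*}
and all factors $g(m,n;k)$ and $P_k(a)$ are nonnegative. The orthogonalization measure of $(Q_n)$ is the pushforward $\mu_Q$ of $\mu$ under $y\mapsto y/a$, whose support is exactly $[-1,1]$; by \eqref{eq:consequencegelfand} this forces $\widehat{\mathbb{N}_0}^Q=[-1,1]$, so Theorem~\ref{thm:greaterthantwo} applies and yields $h_Q(n)\geq 2$ for all $n\in\mathbb{N}$.

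Finally, I would relate $h_Q$ to $h$. A change of variables in $\int P_n^2\,\mathrm{d}\mu$ gives $\int Q_n^2\,\mathrm{d}\mu_Q=1/(P_n(a)^2 h(n))$, whence $h_Q(n)=P_n(a)^2\,h(n)$. Combining this with $h_Q(n)\geq 2$ and $P_n(a)\in(0,1]$ yields
\begin{equation*}
h(n)=\frac{h_Q(n)}{P_n(a)^2}\geq\frac{2}{P_n(a)^2}\geq 2,
\end{equation*}
which is the desired conclusion. The only genuine subtlety is the verification that the rescaled system $(Q_n)$ is again a polynomial hypergroup with maximal dual space; the rest is a routine change of variables together with the bound $|P_n(a)|\leq 1$ derived from the inclusion $a\in\widehat{\mathbb{N}_0}$.
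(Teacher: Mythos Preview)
Your argument is correct, but it takes a genuinely different route from the paper. The paper does \emph{not} rescale: it observes that $\mathrm{supp}\;\mu=[-a,a]$ forces all zeros of each $P_n$ to lie in $(-a,a)$, so $P_n$ is monotone on $[a,1]$ and hence $|P_n(x)|\leq P_n(1)=1$ there; combined with $[-a,a]\subseteq\widehat{\mathbb{N}_0}$ and symmetry this gives $\widehat{\mathbb{N}_0}=[-1,1]$ for the \emph{original} system, and Theorem~\ref{thm:greaterthantwo} applies directly. Your rescaling via $Q_n(x)=P_n(ax)/P_n(a)$ is longer but also legitimate, and it has a small bonus: the identity $h(n)=h_Q(n)/P_n(a)^2$ together with $h_Q(n)\geq2$ actually yields the sharper bound $h(n)\geq 2/P_n(a)^2$. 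Conversely, the paper's approach delivers the stronger structural conclusion that the original hypergroup already has maximal dual space $\widehat{\mathbb{N}_0}=[-1,1]$, a fact your rescaling argument neither needs nor produces.
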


\begin{proof}
If $\mathrm{supp}\;\mu=[-a,a]$ for some $a\in(0,1]$, then $[-a,a]\subseteq\widehat{\mathbb{N}_0}$. Since the zeros of the polynomials $(P_n(x))_{n\in\mathbb{N}_0}$ are real, simple and located in $(-a,a)$, every $P_n(x)$ is nondecreasing on $[a,1]$. This shows that also $(a,1]\subseteq\widehat{\mathbb{N}_0}$. Finally, by symmetry we can conclude that $\widehat{\mathbb{N}_0}=[-1,1]$. Hence, the assertion follows from Theorem~\ref{thm:greaterthantwo}.
\end{proof}

\begin{corollary}\label{cor:greaterthantwo}
If $(c_n)_{n\in\mathbb{N}}$ is convergent, then $h(n)\geq2$ for all $n\in\mathbb{N}$.
\end{corollary}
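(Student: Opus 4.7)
My plan is to invoke Theorem~\ref{thm:greaterthantwo} after establishing $\widehat{\mathbb{N}_0}=[-1,1]$. Setting $c^\ast:=\lim_n c_n$, I would first note $c^\ast\leq 1/2$: from \eqref{eq:hrec} the ratios $h(n)/h(n-1)=(1-c_{n-1})/c_n$ tend to $(1-c^\ast)/c^\ast$, so if $c^\ast>1/2$ these ratios are eventually $<1$ and $h(n)\to 0$, contradicting the universal bound $h(n)=1/g(n,n;0)\geq 1$ coming from nonnegative linearization.

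Next I would invoke Blumenthal's theorem on the Jacobi matrix of $(p_n(x))_{n\in\mathbb{N}_0}$: the off-diagonal entries $\alpha_n^2=c_n(1-c_{n-1})$ converge to $c^\ast(1-c^\ast)$, so the essential support of $\mu$ equals $[-a,a]$ with $a:=2\sqrt{c^\ast(1-c^\ast)}\in[0,1]$. When $c^\ast=1/2$ this forces $\mathrm{supp}\,\mu=[-1,1]$ and Corollary~\ref{cor:support} immediately yields the claim. For $c^\ast<1/2$, isolated mass points of $\mu$ outside $[-a,a]$ can accumulate only at $\pm a$; since $h(n)$ grows at least exponentially, $\sum_n h(n)=\infty$ rules out $\pm 1$ as mass points, so $\mathrm{supp}\,\mu\subseteq[-b,b]$ for some $b\in[a,1)$. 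Arguing as in the proof of Corollary~\ref{cor:support}, the zeros of every $P_n$ lie in $(-b,b)$, so each $P_n$ is positive and monotone on $[b,1]$ and thus $[b,1]\cup[-1,-b]\subseteq\widehat{\mathbb{N}_0}$.

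For the remaining points $x\in(-b,b)\setminus\mathrm{supp}\,\mu$ I would appeal to Poincar\'e--Perron applied to \eqref{eq:threetermrecmonic}. Outside the essential spectrum, the characteristic roots $\lambda_\pm(x)=(x\pm\sqrt{x^2-a^2})/2$ of the limiting recurrence satisfy $|\lambda_+(x)|\leq\lambda_+(1)=1-c^\ast$ throughout $[-1,1]$, and $\sigma_n(x)\sim A(x)\lambda_+(x)^n$ at non-mass-points (with the faster subordinate $\sim B(x)\lambda_-(x)^n$ at mass points). Since $\sigma_n(1)=\prod_{k=1}^{n-1}a_k$ and $1\notin\mathrm{supp}\,\mu$ guarantees $A(1)\neq 0$, the quotient $|P_n(x)|=|\sigma_n(x)/\sigma_n(1)|$ stays bounded in $n$ for every $x\in[-1,1]$, giving $\widehat{\mathbb{N}_0}=[-1,1]$. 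Theorem~\ref{thm:greaterthantwo} then yields $h(n)\geq 2$.

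The main obstacle is making the last step rigorous without a summable-perturbation hypothesis on $(\alpha_n^2)$: one must secure the non-vanishing of $A(1)$ (from $\sum_n h(n)=\infty$) and pass to the subordinate asymptotic at each mass point; the degenerate case $c^\ast=0$, where the limiting recurrence is only first-order, can be handled by a direct bootstrap on the three-term recurrence.
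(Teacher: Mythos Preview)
Your route is workable but considerably more laborious than the paper's. The paper simply invokes \cite[Theorem (2.2)]{La94}, which states that under nonnegative linearization of products a convergent sequence $(c_n)$ has limit $c\in(0,1/2]$ and, crucially, that $\mathrm{supp}\;\mu$ is \emph{exactly} the interval $[-2\sqrt{c(1-c)},2\sqrt{c(1-c)}]$---no isolated mass points outside. Corollary~\ref{cor:support} then applies in one line. (The paper also notes an alternative: \cite[Theorem (2.6)]{La94} or \cite[Corollary 2]{LR93} give nonnegativity of all connection coefficients $C_n(k)$, so Corollary~\ref{cor:greaterthantwoconcoeff} applies.) You are essentially trying to re-prove the ``no stray mass points matter'' part of Lasser's theorem by hand via Blumenthal plus asymptotics, which is why your argument is longer.

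Your self-identified obstacle is less serious than you fear, though. You do \emph{not} need the full Perron asymptotic $\sigma_n(x)\sim A(x)\lambda_+(x)^n$ (which indeed typically requires a summability hypothesis); Poincar\'e's ratio theorem alone suffices, and that needs only convergence of the recurrence coefficients. For a non-mass-point $x$ with $a<|x|<1$, the $\ell^2$ characterization of mass points forces $\sigma_n(x)/\sigma_{n-1}(x)\to\lambda_+(x)$ rather than $\lambda_-(x)$; the same reasoning at $x=1$ (where $\sum_n p_n(1)^2=\sum_n h(n)=\infty$ rules out a mass point) gives $\sigma_n(1)/\sigma_{n-1}(1)\to\lambda_+(1)$. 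Hence $P_n(x)/P_{n-1}(x)\to\lambda_+(x)/\lambda_+(1)<1$, so $P_n(x)\to0$ and \eqref{eq:dualrelation} places $x$ in $\widehat{\mathbb{N}_0}$. No $A(1)\neq0$ statement is needed. The degenerate case $c^\ast=0$ fits into the same Poincar\'e framework (roots $x$ and $0$ have distinct moduli for $x\neq0$), so no separate bootstrap is required either. With these simplifications your argument goes through, but the paper's appeal to \cite{La94} is the clean way to package it.
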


\begin{proof}
If $(c_n)_{n\in\mathbb{N}}$ is convergent, then the limit $c$ is an element of $(0,1/2]$ and
\begin{equation*}
\mathrm{supp}\;\mu=[-2\sqrt{c(1-c)},2\sqrt{c(1-c)}]
\end{equation*}
due to \cite[Theorem (2.2)]{La94}, so the assertion follows from Corollary~\ref{cor:support}. Alternatively, one can obtain the result from Corollary~\ref{cor:greaterthantwoconcoeff}: by \cite[Theorem (2.6)]{La94} or \cite[Corollary 2]{LR93}, all connection coefficients $C_n(0),\ldots,C_n(n)$ are nonnegative.
\end{proof}

\begin{example}[cosh-polynomials]\label{example:cosh}
Let $a>0$, and let $(c_n)_{n\in\mathbb{N}}$ be given by
\begin{equation*}
c_n=\frac{\cosh(a(n-1))}{2\cosh(a n)\cosh(a)}.
\end{equation*}
$(c_n)_{n\in\mathbb{N}}$ is strictly decreasing, so Theorem~\ref{thm:szwarcnonnegII} cannot be applied. Nevertheless, the corresponding sequence $(P_n(x))_{n\in\mathbb{N}_0}$ satisfies nonnegative linearization of products (and therefore induces a polynomial hypergroup on $\mathbb{N}_0$); the linearization \eqref{eq:productlinear} takes the very simple form
\begin{equation*}
P_m(x)P_n(x)=\frac{\cosh(a(n-m))}{2\cosh(a m)\cosh(a n)}P_{n-m}(x)+\frac{\cosh(a(m+n))}{2\cosh(a m)\cosh(a n)}P_{m+n}(x)
\end{equation*}
for $n\geq m\geq 1$ \cite[Sect. 6]{La05}. The orthogonalization measure $\mu$ is absolutely continuous and satisfies
\begin{equation}\label{eq:measurecosh}
\mathrm{d}\mu(x)=\frac{1}{\pi}\cdot\left(\frac{1}{\cosh^2(a)}-x^2\right)^{-\frac{1}{2}}\chi_{\left(-\frac{1}{\cosh(a)},\frac{1}{\cosh(a)}\right)}(x)\,\mathrm{d}x
\end{equation}
\cite[Sect. 6]{La05}. Since
\begin{equation*}
\lim_{n\to\infty}c_n=\frac{1}{1+e^{2a}},
\end{equation*}
Corollary~\ref{cor:greaterthantwo} can be applied. For this easy example, the Haar weights grow exponentially and are also explicitly known with
\begin{equation*}
h(n)=\begin{cases} 1, & n=0, \\ 2\cosh^2(a n), & n\in\mathbb{N}, \end{cases}
\end{equation*}
see \cite[Sect. 6]{La05}. The desired estimation $h(n)\geq2\;(n\in\mathbb{N})$ also follows just from the boundedness of $(c_n)_{n\in\mathbb{N}}$ by $1/2$. The limiting case $a=0$ corresponds to the Chebyshev polynomials of the first kind. Moreover, it is easy to see from \eqref{eq:measurecosh} that
\begin{equation*}
T_n(x)=\frac{P_n\left(\frac{x}{\cosh(a)}\right)}{P_n\left(\frac{1}{\cosh(a)}\right)}
\end{equation*}
for every $n\in\mathbb{N}_0$; in other words: if one rescales the cosh-polynomials in such a way that the right endpoint of the support of the measure becomes $1$, then one obtains the Chebyshev polynomials of the first kind (up to renormalization), independently from the parameter $a$. A similar procedure for a less trivial example will occur and be crucial in Section~\ref{sec:karlinmcgregor} in order to construct examples with $h(1)<2$.
\end{example}

Considering the orthonormal polynomials $(p_n(x))_{n\in\mathbb{N}_0}$ and following Nevai \cite{Ne79}, one says $\mu\in M(0,b)$, $b\in(0,1]$, if $\lim_{n\to\infty}\alpha_n=b/2$ or, equivalently\footnote{The implication $\lim_{n\to\infty}\alpha_n=b/2\Rightarrow\lim_{n\to\infty}c_n=(1-\sqrt{1-b^2})/2$ is not obvious and can be seen as follows: let $\lim_{n\to\infty}\alpha_n=b/2$. Results on chain sequences \cite[Theorem III-6.4]{Ch78} \cite[Proposition 5]{Sz94a} yield that $(c_n)_{n\in\mathbb{N}}$ converges to $(1-\sqrt{1-b^2})/2$ or $(1+\sqrt{1-b^2})/2$. Since $(c_n)_{n\in\mathbb{N}}$ cannot converge to a value which is strictly greater than $1/2$ \cite[Theorem (2.2)]{La94}, we have $\lim_{n\to\infty}c_n=(1-\sqrt{1-b^2})/2$.},
\begin{equation*}
\lim_{n\to\infty}c_n=\frac{1}{2}(1-\sqrt{1-b^2}).
\end{equation*}
Many naturally occurring examples satisfy $\mu\in M(0,1)$. The cosh-polynomials considered in Example~\ref{example:cosh} satisfy $\mu\in M(0,b)$ with $b<1$.

\begin{corollary}\label{cor:nevai}
If $\mu\in M(0,b)$ for some $b\in(0,1]$, then $h(n)\geq2$ for all $n\in\mathbb{N}$.
\end{corollary}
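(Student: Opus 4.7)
The plan is to observe that the hypothesis $\mu\in M(0,b)$ is literally the convergence assumption required by Corollary~\ref{cor:greaterthantwo}, so the statement is an immediate repackaging once the translation between the two equivalent formulations of $M(0,b)$ has been carried out. Concretely, I would first invoke the footnote's equivalence: the assumption $\lim_{n\to\infty}\alpha_n=b/2$ forces, by the theory of chain sequences as recorded in \cite[Theorem III-6.4]{Ch78} and \cite[Proposition 5]{Sz94a}, that every accumulation point of $(c_n)_{n\in\mathbb{N}}$ lies in the two-point set $\{(1-\sqrt{1-b^2})/2,\,(1+\sqrt{1-b^2})/2\}$. The second candidate is strictly greater than $1/2$ (whenever $b<1$; and equals $1/2$ when $b=1$, in which case both candidates coincide) and is ruled out as a limit point by \cite[Theorem (2.2)]{La94}, which forbids $(c_n)_{n\in\mathbb{N}}$ from accumulating above $1/2$. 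Hence $(c_n)_{n\in\mathbb{N}}$ is convergent with $\lim_{n\to\infty}c_n=(1-\sqrt{1-b^2})/2\in(0,1/2]$.

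Once convergence of $(c_n)_{n\in\mathbb{N}}$ is in hand, I would simply quote Corollary~\ref{cor:greaterthantwo} to conclude $h(n)\geq 2$ for all $n\in\mathbb{N}$ and finish the proof. There is no genuine obstacle: the non-trivial ingredient is precisely the $\alpha_n$-vs-$c_n$ equivalence, but this has already been spelled out in the footnote preceding the statement, so the body of the proof reduces to citing the footnote together with Corollary~\ref{cor:greaterthantwo}. If desired, one could instead route the argument through Corollary~\ref{cor:support} by observing that \cite[Theorem (2.2)]{La94} also yields $\operatorname{supp}\mu=[-b,b]$ under the Nevai hypothesis, but the shortest path is via Corollary~\ref{cor:greaterthantwo}.
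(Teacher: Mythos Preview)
Your proposal is correct and matches the paper's own proof, which simply states that the corollary is a reformulation of Corollary~\ref{cor:greaterthantwo}. You have merely spelled out in more detail the equivalence already recorded in the footnote preceding the statement.
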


\begin{proof}
This a reformulation of Corollary~\ref{cor:greaterthantwo}.
\end{proof}

\begin{example}[generalized Chebyshev polynomials reconsidered]\label{example:gencheb}
For all $(\alpha,\beta)\in V$, including those pairs where $\alpha+\beta+1<0$ (cf. Figure~\ref{fig:specialregiongencheb}) and $(c_{2n})_{n\in\mathbb{N}}$ exceeds $1/2$, the generalized Chebyshev polynomials $(T_n^{(\alpha,\beta)}(x))_{n\in\mathbb{N}_0}$ fulfill the assumptions of Theorem~\ref{thm:greaterthantwo} and Corollary~\ref{cor:greaterthantwoconcoeff} to Corollary~\ref{cor:nevai} (the latter with $\mu\in M(0,1)$).
\end{example}

\begin{remark}
The estimation $h(n)\geq2\;(n\in\mathbb{N})$, which is fulfilled with equality for the Chebyshev polynomials of the first kind, can be interpreted in the following sense: under the conditions of Theorem~\ref{thm:greaterthantwo}, the Chebyshev polynomials of the first kind are optimal w.r.t. minimizing the Haar function. Comparing this to the optimality assertion of Lemma~\ref{lma:greaterthantwo}, one might ask the question whether, under the conditions of Theorem~\ref{thm:greaterthantwo} (or the subsequent corollaries), $h(n)$ is strictly greater than $2$ for all $n\in\mathbb{N}$ as soon as $(P_n(x))_{n\in\mathbb{N}_0}\neq(T_n(x))_{n\in\mathbb{N}_0}$. However, this is not the case: for every $\alpha>-1/2$, the generalized Chebyshev polynomials $(T_n^{(\alpha,\alpha)}(x))_{n\in\mathbb{N}_0}\neq(T_n(x))_{n\in\mathbb{N}_0}$ satisfy nonnegative linearization of products, as well as the conditions of Theorem~\ref{thm:greaterthantwo} and of the subsequent corollaries (cf. Example~\ref{example:gencheb}), but $h(1)=2$ (cf. \eqref{eq:genchebhodd} and Figure~\ref{fig:hgenchebmeasures}).
\end{remark}

\begin{figure}
\centering
\includegraphics[width=0.60\textwidth]{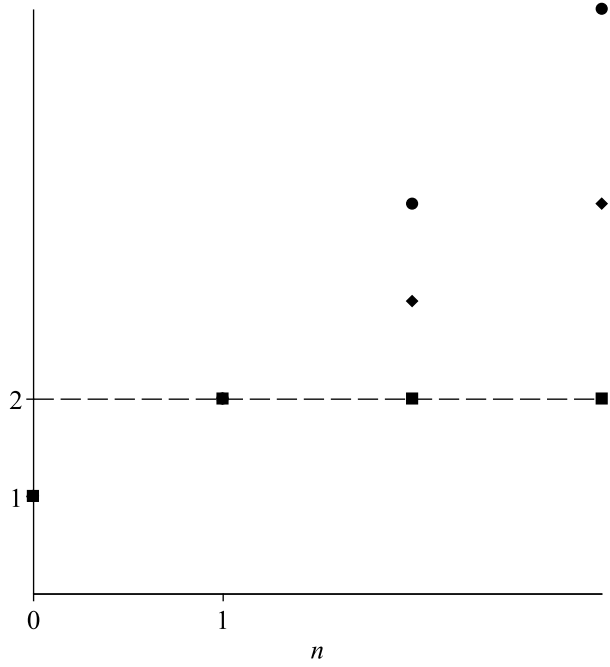}
\caption{The first Haar weights $h(n)$ belonging to $(T_n^{(\alpha,\alpha)}(x))_{n\in\mathbb{N}_0}$, cf. \eqref{eq:genchebhodd} and \eqref{eq:genchebheven}, for $\alpha=-1/2$ (box, corresponds to the Chebyshev polynomials of the first kind $(T_n(x))_{n\in\mathbb{N}_0}$), $\alpha=0$ (diamond) and $\alpha=1/2$ (circle). In all cases, one has $h(1)=2$.}\label{fig:hgenchebmeasures}
\end{figure}

Note that the formal definitions of $\widehat{\mathbb{N}_0}$ \eqref{eq:dualdef} and $h$ \eqref{eq:hdef} also make sense if nonnegative linearization of products is not satisfied (and hence without the underlying hypergroup structure). It is obvious that still $\{\pm1\}\subseteq\widehat{\mathbb{N}_0}\subseteq[-1,1]$. However, the property $\mathrm{supp}\;\mu\subseteq\widehat{\mathbb{N}_0}$, which is a consequence of harmonic/functional analysis on polynomial hypergroups, does no longer have to be satisfied. \eqref{eq:hrec} remains true, but $h$ can now map into the larger codomain $(0,\infty)$. The rest of the section is devoted to the question which of our results remain true under these more general conditions. From a harmonic/functional analytic point of view, such considerations are interesting because reasonable generalizations of the hypergroup structure exist under weaker conditions than nonnegative linearization of products \cite{LOR07,OW96}.\\

The proof of Theorem~\ref{thm:greaterthantwo} remains fully true if the nonnegative linearization of products condition is dropped, as well as the proof of Corollary~\ref{cor:greaterthantwoconcoeff} which is based on nonnegativity of the connection coefficients $C_n(0),\ldots,C_n(n)$. There are rather general sufficient criteria for the nonnegativity of all connection coefficients $C_n(0),\ldots,C_n(n)$. In particular, the boundedness of $(c_n)_{n\in\mathbb{N}}$ from above by $1/2$ is sufficient \cite[Corollary 1]{Sz92c} (however, recall that the desired estimation $h(n)\geq2\;(n\in\mathbb{N})$ is trivial in that case).\\

The following example shows that the further corollaries do not extend if the nonnegative linearization of products condition is dropped (and therefore the tool of harmonic/functional analysis on polynomial hypergroups is no longer available), however.

\begin{example}[Grinspun polynomials]
Let $c_1\in(0,1)$ be arbitrary, and let $c_n=1/2$ for every $n\geq2$. The resulting polynomials $(P_n(x))_{n\in\mathbb{N}_0}$ are the Grinspun polynomials and orthogonal w.r.t. a measure $\mu$ with $\mathrm{supp}\;\mu=[-1,1]$ \cite[Chapter VI 13 (C) (iv)]{Ch78}. It is clear that $\mu\in M(0,1)$. Via induction and \eqref{eq:threetermreccheb}, it is easy to see that
\begin{equation}\label{eq:grinspunexpand}
P_n(x)=\frac{1}{2-2c_1}T_n(x)+\frac{1-2c_1}{2-2c_1}T_{n-2}(x)\;(n\geq2)
\end{equation}
and therefore
\begin{equation}\label{eq:grinspunexpandmod}
P_n(x)=T_n(x)+\frac{2c_1-1}{2-2c_1}(T_n(x)-T_{n-2}(x))\;(n\geq2)
\end{equation}
(cf. also \cite[VI-(13.9)]{Ch78} and \cite[Section 3.2]{Vo93}). If $c_1\leq1/2$, then nonnegative linearization of products is satisfied (and hence a polynomial hypergroup on $\mathbb{N}_0$ is induced) \cite[3 (g) (ii)]{La83}. If $c_1>1/2$, however, then nonnegative linearization of products fails and the expansions \eqref{eq:grinspunexpand} imply that $(P_n(x))_{n\in\mathbb{N}_0}$ is uniformly bounded on $[-1,1]$ by $\pm c_1/(1-c_1)$, but \eqref{eq:hrec} yields
\begin{equation*}
h(1)=\frac{1}{c_1}<2
\end{equation*}
and
\begin{equation*}
h(n)=2\frac{1-c_1}{c_1}<2\;(n\geq2).
\end{equation*}
This shows that Corollary~\ref{cor:bound} is not valid without nonnegative linearization of products; if $c_1>2/3$, then $h$ does not even map to $[1,\infty)$. Reconsidering the proof of Corollary~\ref{cor:bound}, we see that \eqref{eq:dualrelation} made use of nonnegative linearization of products (and of the resulting harmonic/functional analysis due to the hypergroup aspect). Clearly, the example also shows that Corollary~\ref{cor:support}, Corollary~\ref{cor:greaterthantwo} and Corollary~\ref{cor:nevai} are not valid without nonnegative linearization of products. It is already clear from the preceding considerations that neither Corollary~\ref{cor:greaterthantwoconcoeff} nor Theorem~\ref{thm:greaterthantwo} can apply to the case $c_1>1/2$, and one can see from \eqref{eq:grinspunexpand} and \eqref{eq:grinspunexpandmod} at which stages an application exactly fails: let $c_1>1/2$. Then \eqref{eq:grinspunexpand} yields that $C_n(n-2)<0$ for all $n\geq2$. Moreover, one has $\widehat{\mathbb{N}_0}=\{\pm1\}$, which can be seen as follows: let $x\in(-1,1)$ and $\varphi\in(0,\pi)$ with $x=\cos(\varphi)$. Then, by \eqref{eq:grinspunexpandmod},
\begin{align*}
P_n(x)&=T_n(\cos(\varphi))+\frac{2c_1-1}{2-2c_1}(T_n(\cos(\varphi))-T_{n-2}(\cos(\varphi)))\\
&=\cos(n\varphi)+\frac{2c_1-1}{2-2c_1}(\cos(n\varphi)-\cos((n-2)\varphi))\\
&=\cos(n\varphi)+\frac{2c_1-1}{2-2c_1}(1-\cos(2\varphi))\cos(n\varphi)-\frac{2c_1-1}{2-2c_1}\sin(2\varphi)\sin(n\varphi)\\
\end{align*}
for every $n\geq2$. Now let $(n_k)_{k\in\mathbb{N}}\subseteq\mathbb{N}\backslash\{1\}$ be a sequence with $\lim_{k\to\infty}\cos(n_k\varphi)=1$ (and consequently $\lim_{k\to\infty}\sin(n_k\varphi)=0$). Then
\begin{equation*}
\lim_{k\to\infty}P_{n_k}(x)=1+\frac{2c_1-1}{2-2c_1}(1-\cos(2\varphi))>1
\end{equation*}
and we can conclude that $x\notin\widehat{\mathbb{N}_0}$.
\end{example}

\section{Two types of examples which do not satisfy $h(n)\geq2\;(n\in\mathbb{N})$ and properties of their dual spaces}\label{sec:karlinmcgregor}

Having seen sufficient criteria for $h(n)\geq2\;(n\in\mathbb{N})$ in the previous section, we now construct examples where nonnegative linearization of products is satisfied but $h(1)<2$. Moreover, we deal with necessary criteria concerning the latter property. We begin with the following observation concerning the dual space $\widehat{\mathbb{N}_0}$.

\begin{proposition}\label{prp:dualspace}
If $h(1)=1+\epsilon$ with $\epsilon\in(0,1)$, then
\begin{equation}\label{eq:crucialnecessarysharpening}
\widehat{\mathbb{N}_0}\subseteq\left[-1,-\sqrt{\frac{1-\epsilon}{1+\epsilon}}\right]\cup\left[\sqrt{\frac{1-\epsilon}{1+\epsilon}},1\right].
\end{equation}
\end{proposition}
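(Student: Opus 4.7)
The plan is to extract all the needed information from $P_2(x)$ alone. The recurrence \eqref{eq:threetermrec} at $n=1$ together with $P_0(x)=1$ and $P_1(x)=x$ gives the explicit formula
\begin{equation*}
P_2(x)=\frac{x^2-c_1}{a_1}.
\end{equation*}
On the other hand, the hypothesis $h(1)=1+\epsilon$ combined with \eqref{eq:hrec} (recall $a_0=1$) forces $c_1=1/(1+\epsilon)$ and hence $a_1=1-c_1=\epsilon/(1+\epsilon)$. In particular, $c_1-a_1=(1-\epsilon)/(1+\epsilon)$.

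For any $x\in\widehat{\mathbb{N}_0}$, the definition \eqref{eq:dualdef} requires $|P_n(x)|\leq1$ for every $n\in\mathbb{N}_0$, so in particular $P_2(x)\geq-1$. Plugging in the formula for $P_2(x)$ gives $x^2-c_1\geq-a_1$, i.e. $x^2\geq c_1-a_1=(1-\epsilon)/(1+\epsilon)$, which is exactly \eqref{eq:crucialnecessarysharpening}.

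There is no substantial obstacle: the argument is essentially a one-line consequence of looking at the second polynomial and using the trivial bound $P_2(x)\geq-1$. It is worth noting that the proof never invokes nonnegative linearization of products or any hypergroup-level harmonic analysis, so the statement in fact remains valid in the more general framework considered at the end of Section~\ref{sec:sufficient}, where $h$ is only defined via \eqref{eq:hrec}. The assumption $\epsilon\in(0,1)$ (rather than $\epsilon\geq0$) is what makes the conclusion non-trivial: for $\epsilon=0$ it just reproduces $\widehat{\mathbb{N}_0}\subseteq[-1,1]$.
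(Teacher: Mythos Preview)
Your proof is correct and follows essentially the same approach as the paper: both arguments compute $P_2(x)$ explicitly from the recurrence and the value $c_1=1/(1+\epsilon)$, and then use the single constraint $P_2(x)\geq-1$ coming from the definition of $\widehat{\mathbb{N}_0}$ to force $x^2\geq(1-\epsilon)/(1+\epsilon)$. The paper phrases this by locating the points where $P_2$ equals $-1$, whereas you solve the inequality directly, but the content is identical.
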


\begin{proof}
If $h(1)=1+\epsilon$ with $\epsilon\in(0,1)$, then
\begin{equation*}
P_2(x)=\frac{x^2-c_1}{1-c_1}=\frac{h(1)x^2-1}{h(1)-1}=\frac{(1+\epsilon)x^2-1}{\epsilon},
\end{equation*}
so
\begin{equation*}
P_2\left(\pm\sqrt{\frac{1-\epsilon}{1+\epsilon}}\right)=-1.
\end{equation*}
Therefore, we have $P_2(x)<-1$ for $x\in(-\sqrt{(1-\epsilon)/(1+\epsilon)},\sqrt{(1-\epsilon)/(1+\epsilon)})$, which yields the assertion.
\end{proof}

\begin{remark}\label{rem:dualspace}
As a much less trivial result, in Theorem~\ref{thm:smallerthanoneplusepsilon} we will obtain that there are examples which satisfy \eqref{eq:crucialnecessarysharpening} with \textit{equality}; so the estimation provided by Proposition~\ref{prp:dualspace} cannot be improved.
\end{remark}

As an immediate consequence of Proposition~\ref{prp:dualspace}, all examples with $h(1)<2$ have to share the necessary condition
\begin{equation}\label{eq:crucialnecessary}
0\notin\widehat{\mathbb{N}_0}.
\end{equation}

We use the Karlin--McGregor polynomials as a starting point in order to construct polynomial hypergroups with $h(1)<2$. For $\alpha,\beta\geq2$, the Karlin--McGregor polynomials $(K_n^{(\alpha,\beta)}(x))_{n\in\mathbb{N}_0}\subseteq\mathbb{R}[x]$ are given by
\begin{equation*}
c_{2n-1}=\frac{1}{\alpha}
\end{equation*}
and
\begin{equation*}
c_{2n}=\frac{1}{\beta}
\end{equation*}
\cite[Sect. 6]{La05}. For any choice of $\alpha,\beta\geq2$, $(K_n^{(\alpha,\beta)}(x))_{n\in\mathbb{N}_0}$ fulfills the conditions of Theorem~\ref{thm:szwarcnonnegII}, so nonnegative linearization of products is always satisfied and $h(n)\geq2\;(n\in\mathbb{N})$. If $\alpha=\beta=2$, one obtains the Chebyshev polynomials of the first kind (which played a crucial role in Section~\ref{sec:sufficient}). One has $\widehat{\mathbb{N}_0}=[-1,1]$ \cite[Sect. 6]{La05}, so the property $h(n)\geq2\;(n\in\mathbb{N})$ is also a consequence of our sufficient criterion Theorem~\ref{thm:greaterthantwo}. Nevertheless, a modification of the Karlin--McGregor polynomials will yield examples which fulfill the necessary condition \eqref{eq:crucialnecessary} and even the desired property $h(1)<2$ (see Theorem~\ref{thm:smallerthantwo}, the subsequent remarks and Theorem~\ref{thm:smallerthanoneplusepsilon} below). The basic underlying idea is that there are choices of $\alpha,\beta\geq2$ such that $(K_n^{(\alpha,\beta)}(x))_{n\in\mathbb{N}_0}$ satisfies the condition
\begin{equation}\label{eq:crucialnecessaryprime}
0\notin\mathrm{supp}\;\mu
\end{equation}
(cf. \eqref{eq:supportmuKarlinMcGregor} below). As nonnegative linearization of products yields $\mathrm{supp}\;\mu\subseteq\widehat{\mathbb{N}_0}$, \eqref{eq:crucialnecessaryprime} is a weaker condition than \eqref{eq:crucialnecessary}. In order to obtain the stronger condition \eqref{eq:crucialnecessary}, our strategy will be to modify the polynomials in such a way that \eqref{eq:crucialnecessaryprime} remains true and $\mathrm{supp}\;\mu$ coincides with the dual space $\widehat{\mathbb{N}_0}$.\\

We first recall some basics about the Karlin--McGregor polynomials. If $\alpha\leq\beta$, then the orthogonalization measure $\mu$ is absolutely continuous and satisfies
\begin{equation}\label{eq:measureKarlinMcGregorabscont}
\mathrm{d}\mu(x)=\begin{cases} \frac{\beta\sqrt{(\gamma_1^2-x^2)(x^2-\gamma_2^2)}}{2\pi|x|(1-x^2)}\chi_{(-\gamma_1,-\gamma_2)\cup(\gamma_2,\gamma_1)}(x)\,\mathrm{d}x, & \alpha<\beta, \\ \frac{\alpha\sqrt{\gamma_1^2-x^2}}{2\pi(1-x^2)}\chi_{(-\gamma_1,\gamma_1)}(x)\,\mathrm{d}x, & \alpha=\beta, \end{cases}
\end{equation}
where
\begin{equation*}
\gamma_1:=\frac{1}{\sqrt{\alpha\beta}}(\sqrt{\alpha-1}+\sqrt{\beta-1})
\end{equation*}
and
\begin{equation*}
\gamma_2:=\frac{1}{\sqrt{\alpha\beta}}|\sqrt{\alpha-1}-\sqrt{\beta-1}|.
\end{equation*}
If $\alpha>\beta$, then $\mu$ consists of an absolutely continuous part given by
\begin{equation}\label{eq:measureKarlinMcGregorabscontpart}
\frac{\beta\sqrt{(\gamma_1^2-x^2)(x^2-\gamma_2^2)}}{2\pi|x|(1-x^2)}\chi_{(-\gamma_1,-\gamma_2)\cup(\gamma_2,\gamma_1)}(x)\,\mathrm{d}x
\end{equation}
and a discrete part given by the point mass
\begin{equation}\label{eq:measureKarlinMcGregordiscr}
\mu(\{0\})=\frac{\alpha-\beta}{\alpha}.
\end{equation}
Essentially, the preceding formulas for $\mu$ can be found in Karlin and McGregor's paper \cite{KM59}, where a slightly different system of orthogonal polynomials was considered. Moreover, the first part of \eqref{eq:measureKarlinMcGregorabscont} can be found in \cite[p. 49, p. 108]{Wa14} (without reference or explanation), and \eqref{eq:measureKarlinMcGregordiscr} can also be found in \cite{FL00}. Up to a small mistake, the formulas for $\mu$ and a proof via Stieltjes transforms can also be found in \cite[Theorem 4.2]{Eh94}. Since the deduction of \eqref{eq:measureKarlinMcGregorabscont} to \eqref{eq:measureKarlinMcGregordiscr} from the polynomials originally considered in Karlin and McGregor's paper \cite{KM59} is not obvious (and since the relevant passage of \cite{KM59} contains a small mistake, too), we will give a proof sketch in the appendix. For the moment, we note that one particularly has
\begin{equation}\label{eq:supportmuKarlinMcGregor}
\mathrm{supp}\;\mu=\begin{cases} [-\gamma_1,-\gamma_2]\cup[\gamma_2,\gamma_1], & \alpha\leq\beta, \\ [-\gamma_1,-\gamma_2]\cup\{0\}\cup[\gamma_2,\gamma_1], & \alpha>\beta. \end{cases}
\end{equation}
It is obvious from \eqref{eq:hrec} that the Haar weights are given by $h(0)=1$ and
\begin{equation}\label{eq:karlinmcgregorhodd}
h(2n-1)=\alpha(\alpha-1)^{n-1}(\beta-1)^{n-1}
\end{equation}
and
\begin{equation}\label{eq:karlinmcgregorheven}
h(2n)=\beta(\alpha-1)^n(\beta-1)^{n-1}
\end{equation}
for $n\in\mathbb{N}$ \cite[Sect. 6]{La05}. Moreover, it is easy to see via induction that
\begin{equation}\label{eq:karlinmcgregorgamma1even}
K_{2n}^{(\alpha,\beta)}(\gamma_1)=\frac{(\alpha-2)\sqrt{\beta-1}+(\beta-2)\sqrt{\alpha-1}}{\beta(\alpha-1)^{\frac{n+1}{2}}(\beta-1)^{\frac{n}{2}}}\cdot n+\frac{1}{(\alpha-1)^{\frac{n}{2}}(\beta-1)^{\frac{n}{2}}}
\end{equation}
and
\begin{equation}\label{eq:karlinmcgregorgamma1odd}
K_{2n+1}^{(\alpha,\beta)}(\gamma_1)=\frac{(\alpha-2)\sqrt{\beta-1}+(\beta-2)\sqrt{\alpha-1}}{\sqrt{\alpha\beta}(\alpha-1)^{\frac{n+1}{2}}(\beta-1)^{\frac{n+1}{2}}}\cdot n+\frac{\sqrt{\alpha-1}+\sqrt{\beta-1}}{\sqrt{\alpha\beta}(\alpha-1)^{\frac{n}{2}}(\beta-1)^{\frac{n}{2}}}
\end{equation}
for all $n\in\mathbb{N}_0$. Concerning the special case $\alpha=2$, it was shown in \cite[p. 72]{Eh94} via sieved polynomials that
\begin{equation}\label{eq:kmspecialodd}
K_{2n-1}^{(2,\beta)}(x)=\frac{1}{(\beta-1)^{\frac{n}{2}}}x\left(\sqrt{\beta-1}U_{n-1}\left(\frac{(2x^2-1)\beta}{2\sqrt{\beta-1}}\right)-U_{n-2}\left(\frac{(2x^2-1)\beta}{2\sqrt{\beta-1}}\right)\right)
\end{equation}
and
\begin{equation}\label{eq:kmspecialeven}
K_{2n}^{(2,\beta)}(x)=\frac{1}{(\beta-1)^{\frac{n}{2}}}\left(\frac{\beta-1}{\beta}U_n\left(\frac{(2x^2-1)\beta}{2\sqrt{\beta-1}}\right)-\frac{1}{\beta}U_{n-2}\left(\frac{(2x^2-1)\beta}{2\sqrt{\beta-1}}\right)\right)
\end{equation}
for every $n\in\mathbb{N}$, where $(U_n(x))_{n\in\mathbb{N}_0}$ denotes the sequence of Chebyshev polynomials of the second kind (i.e., $U_n(\cos(\varphi))\sin(\varphi)=\sin((n+1)\varphi)$) with the convention $U_{-1}(x):=0$.\\

The following result provides our first---and simplest---example with $h(1)<2$.

\begin{theorem}\label{thm:smallerthantwo}
Let
\begin{equation*}
c_{2n-1}=\frac{6n+4}{9n+9}\\
\end{equation*}
and
\begin{equation*}
c_{2n}=\frac{n+1}{3n+5}.
\end{equation*}
Then $(P_n(x))_{n\in\mathbb{N}_0}$ satisfies nonnegative linearization of products, i.e., $(P_n(x))_{n\in\mathbb{N}_0}$ induces a polynomial hypergroup on $\mathbb{N}_0$. The Haar function $h:\mathbb{N}_0\rightarrow[1,\infty)$ satisfies
\begin{equation*}
h(2n-1)=\frac{9}{5}\left(\frac{n}{2}+\frac{1}{2}\right)^2
\end{equation*}
and
\begin{equation*}
h(2n)=\frac{9}{5}\left(\frac{n}{2}+\frac{5}{6}\right)^2
\end{equation*}
for every $n\in\mathbb{N}$. In particular, one has $h(1)=9/5<2$.
\end{theorem}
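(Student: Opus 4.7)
The theorem has two assertions: the explicit formula for the Haar function (from which $h(1)=9/5<2$ is immediate) and the claim that $(P_n(x))_{n\in\mathbb{N}_0}$ satisfies nonnegative linearization of products. The Haar side reduces to a direct unfolding of \eqref{eq:hrec}; the real content lies in the product linearization, which is delicate here because the boundedness hypothesis $c_n\leq1/2$ of Theorem~\ref{thm:szwarcnonnegII} fails badly: already $c_1=5/9>1/2$, and in fact the whole subsequence $(c_{2n-1})$ exceeds $1/2$ and tends to $2/3$.

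For the Haar weights I would compute $a_{2k-1}=(3k+5)/(9k+9)$ and $a_{2k}=(2k+4)/(3k+5)$ from the given $c_n$, and then pair consecutive factors in the product from \eqref{eq:hrec}. The key identity is
\begin{equation*}
\frac{a_{2k-1}}{c_{2k}}\cdot\frac{a_{2k}}{c_{2k+1}}=\left(\frac{3k+5}{3(k+1)}\right)^{2}\cdot\frac{9(k+2)^{2}}{(3k+5)^{2}}=\frac{(k+2)^{2}}{(k+1)^{2}},
\end{equation*}
whose product from $k=1$ to $k=n-1$ telescopes to $((n+1)/2)^{2}$. Multiplying by the boundary factor $a_{0}/c_{1}=9/5$ yields $h(2n-1)=(9/5)((n+1)/2)^{2}$, i.e.\ the odd-index formula, and one more multiplication by $a_{2n-1}/c_{2n}=((3n+5)/(3(n+1)))^{2}$ produces $h(2n)=(9/5)((3n+5)/6)^{2}=(9/5)(n/2+5/6)^{2}$. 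The value $h(1)=9/5$ is just the $n=1$ case.

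For nonnegative linearization, the guiding observation is that although the $c_n$ are poorly behaved, the associated Jacobi coefficients $\alpha_n^{2}=c_na_{n-1}$ attached to the monic recurrence \eqref{eq:threetermrecmonic} are very clean: a direct check shows $\alpha_{2n-1}^{2}=4/9$ for $n\geq2$, $\alpha_{2n}^{2}=1/9$ for $n\geq1$, and $\alpha_{1}^{2}=5/9$ is the lone anomalous value. Thus $(P_n(x))_{n\in\mathbb{N}_0}$ differs from a purely periodic Jacobi matrix (with period two) only by a rank-one perturbation at the first step; the periodic background matches a Karlin--McGregor pattern with the (inadmissible) parameters $\alpha=3/2$, $\beta=3$, so one cannot simply invoke the standard Karlin--McGregor linearization result recalled in Section~\ref{sec:karlinmcgregor}. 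The plan is then either to apply a refined Szwarc-type sufficient criterion that does not require $c_n\leq1/2$, verifying its hypotheses from the explicit rational forms of $c_{2n-1},c_{2n}$, or, failing that, to set up a direct induction on $g(m,n;k)\geq0$ using the recursion for the linearization coefficients obtained by substituting \eqref{eq:threetermrec} into $P_m(x)P_n(x)$ and exploiting the eventual periodicity of $(\alpha_n^{2})$ to control the signs.

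The main obstacle is precisely this last step. The failure of $c_n\leq1/2$ together with the non-monotonicity of the full sequence $(c_n)$ blocks the most convenient criteria, so the argument must exploit the delicate interaction between the eventual period-two structure of $(\alpha_n^{2})$ and the single rank-one perturbation at the boundary. A natural fallback, should the direct criterion-based proof become unwieldy, is to realize the sequence as a Geronimus-type transform of an explicit two-interval reference family whose linearization signs can be read off directly, and then to transport nonnegativity through the transform.
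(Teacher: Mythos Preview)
Your Haar computation is clean and correct; the paper dispatches that part with the same recursion \eqref{eq:hrec} and a brief induction, so there is nothing to add there.

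The gap is in the nonnegative linearization part: you have not actually proved it. You correctly compute that the monic recurrence data are $\alpha_1^{2}=5/9$ and, for $n\geq2$, $\alpha_{2n-1}^{2}=4/9$, $\alpha_{2n}^{2}=1/9$, and you correctly note that trying to match this \emph{as is} to Karlin--McGregor forces the inadmissible pair $(\alpha,\beta)=(3/2,3)$ plus a rank-one perturbation at the boundary. But then you stop, listing three possible strategies (a refined Szwarc criterion, a direct induction on the $g(m,n;k)$, a Geronimus-type transform) without carrying any of them out. None of these is obvious, and the first two in particular would require substantial further work in the present setting where $c_n\leq1/2$ fails.

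What you are missing is that the obstacle you diagnose disappears under a simple linear rescaling of the variable---an operation that preserves nonnegative linearization of products. Under $x\mapsto x/\gamma$ the monic coefficients transform as $\alpha_n^{2}\mapsto\alpha_n^{2}/\gamma^{2}$. Choosing $\gamma^{2}=9/10$ sends your triple $(5/9,\,1/9,\,4/9)$ to $(1/2,\,1/10,\,2/5)$, which is \emph{exactly} the full monic sequence of the Karlin--McGregor family with $(\alpha,\beta)=(2,5)$, including the first coefficient $\alpha_1^{2}=c_1=1/\alpha=1/2$. There is no rank-one perturbation left. The paper executes precisely this: it verifies by a short induction on the three-term recurrence that $K_n^{(2,5)}(x)=\mathrm{const}\cdot P_n(\sqrt{10}\,x/3)$, and then transports the known nonnegative linearization of $(K_n^{(2,5)})_{n\in\mathbb{N}_0}$ (which does satisfy $c_n\leq1/2$, so Theorem~\ref{thm:szwarcnonnegII} applies) back to $(P_n)_{n\in\mathbb{N}_0}$. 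The scaling factor $\sqrt{10}/3$ is nothing other than $1/\gamma_1$ for $(\alpha,\beta)=(2,5)$. Your ``fallback'' idea is thus the right one, but the reference family is a genuine Karlin--McGregor with admissible parameters after rescaling, not a perturbed one, and the argument is a one-page induction rather than a delicate transport.
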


\begin{proof}
Let $(Q_n(x))_{n\in\mathbb{N}_0}\subseteq\mathbb{R}[x]$ be defined by
\begin{equation*}
Q_{2n}(x)=\frac{3n+5}{2^n\cdot5}P_{2n}\left(\frac{\sqrt{10}}{3}x\right)
\end{equation*}
and
\begin{equation*}
Q_{2n+1}(x)=\frac{3n+6}{2^{n+1}\sqrt{10}}P_{2n+1}\left(\frac{\sqrt{10}}{3}x\right).
\end{equation*}
We \textit{claim} that $(Q_n(x))_{n\in\mathbb{N}_0}$ coincides with the Karlin--McGregor polynomials $(K_n^{(2,5)}(x))_{n\in\mathbb{N}_0}$. Once the claim is established, we obtain the assertions as follows: since $(K_n^{(2,5)}(x))_{n\in\mathbb{N}_0}$ satisfies nonnegative linearization of products, $(P_n(x))_{n\in\mathbb{N}_0}$ satisfies nonnegative linearization of products, too; the explicit formulas for $h$ are easily verified by \eqref{eq:hrec} and induction.\\

$Q_0(x)=1=K_0^{(2,5)}(x)$ and $Q_1(x)=x=K_1^{(2,5)}(x)$ are immediate from the definitions. Let $n\in\mathbb{N}_0$ be arbitrary but fixed, and assume that $Q_{2n}(x)=K_{2n}^{(2,5)}(x)$ and $Q_{2n+1}(x)=K_{2n+1}^{(2,5)}(x)$. Then
\begin{align*}
&K_{2n+2}^{(2,5)}(x)\\
&=\frac{x K_{2n+1}^{(2,5)}(x)-\frac{1}{2}K_{2n}^{(2,5)}(x)}{\frac{1}{2}}\\
&=\frac{x\cdot\frac{3n+6}{2^{n+1}\sqrt{10}}P_{2n+1}\left(\frac{\sqrt{10}}{3}x\right)-\frac{1}{2}\cdot\frac{3n+5}{2^n\cdot5}P_{2n}\left(\frac{\sqrt{10}}{3}x\right)}{\frac{1}{2}}\\
&=\frac{\frac{3n+6}{2^{n+1}\sqrt{10}}\cdot\frac{3}{\sqrt{10}}\left(\frac{3n+8}{9n+18}P_{2n+2}\left(\frac{\sqrt{10}}{3}x\right)+\frac{6n+10}{9n+18}P_{2n}\left(\frac{\sqrt{10}}{3}x\right)\right)-\frac{1}{2}\cdot\frac{3n+5}{2^n\cdot5}P_{2n}\left(\frac{\sqrt{10}}{3}x\right)}{\frac{1}{2}}\\
&=\frac{3n+8}{2^{n+1}\cdot5}P_{2n+2}\left(\frac{\sqrt{10}}{3}x\right)\\
&=Q_{2n+2}(x)
\end{align*}
and
\begin{align*}
&K_{2n+3}^{(2,5)}(x)\\
&=\frac{x K_{2n+2}^{(2,5)}(x)-\frac{1}{5}K_{2n+1}^{(2,5)}(x)}{\frac{4}{5}}\\
&=\frac{x\cdot\frac{3n+8}{2^{n+1}\cdot5}P_{2n+2}\left(\frac{\sqrt{10}}{3}x\right)-\frac{1}{5}\cdot\frac{3n+6}{2^{n+1}\sqrt{10}}P_{2n+1}\left(\frac{\sqrt{10}}{3}x\right)}{\frac{4}{5}}\\
&=\frac{\frac{3n+8}{2^{n+1}\cdot5}\cdot\frac{3}{\sqrt{10}}\left(\frac{2n+6}{3n+8}P_{2n+3}\left(\frac{\sqrt{10}}{3}x\right)+\frac{n+2}{3n+8}P_{2n+1}\left(\frac{\sqrt{10}}{3}x\right)\right)-\frac{1}{5}\cdot\frac{3n+6}{2^{n+1}\sqrt{10}}P_{2n+1}\left(\frac{\sqrt{10}}{3}x\right)}{\frac{4}{5}}\\
&=\frac{3n+9}{2^{n+2}\sqrt{10}}P_{2n+3}\left(\frac{\sqrt{10}}{3}x\right)\\
&=Q_{2n+3}(x).
\end{align*}
\end{proof}

The dual space for the polynomials considered in Theorem~\ref{thm:smallerthantwo} will be obtained later.\\

Reconsidering the proof of Theorem~\ref{thm:smallerthantwo} and following the strategy outlined above, we can find further examples which satisfy $h(1)<2$ or even $h(1)=1+\epsilon$ with $\epsilon\in(0,1)$ (and nonnegative linearization of products). We start with the Karlin--McGregor polynomials $(K_n^{(\alpha,\beta)}(x))_{n\in\mathbb{N}_0}$, $\alpha,\beta\geq2$. Next, we rescale them in such a way that the right endpoint of the support of the measure becomes $1$. Finally, we renormalize the resulting polynomials in such a way that $P_n(1)\equiv1$ again. This procedure ends up in the sequence $(P_n(x))_{n\in\mathbb{N}_0}\subseteq\mathbb{R}[x]$ of modified Karlin--McGregor polynomials given by
\begin{equation*}
P_n(x)=\frac{K_n^{(\alpha,\beta)}(\gamma_1x)}{K_n^{(\alpha,\beta)}(\gamma_1)},
\end{equation*}
and $(P_n(x))_{n\in\mathbb{N}_0}$ still satisfies nonnegative linearization of products. The above-mentioned further examples will be obtained below for suitable choices of $\alpha$ and $\beta$. We first study the polynomials $(P_n(x))_{n\in\mathbb{N}_0}$ in detail and compute the associated orthogonalization measures, Haar measures and recurrence coefficients.\\

Using \eqref{eq:measureKarlinMcGregorabscont} to \eqref{eq:measureKarlinMcGregordiscr}, we find that $(P_n(x))_{n\in\mathbb{N}_0}$ is orthogonal w.r.t. the probability measure $\mu$ satisfying the following properties: if $\alpha\leq\beta$, then $\mu$ is absolutely continuous and fulfills
\begin{equation}\label{eq:measuremodifiedKarlinMcGregorabscont}
\mathrm{d}\mu(x)=\begin{cases} \frac{\beta\gamma_1\sqrt{(1-x^2)(\gamma_1^2x^2-\gamma_2^2)}}{2\pi|x|(1-\gamma_1^2x^2)}\chi_{\left(-1,-\frac{\gamma_2}{\gamma_1}\right)\cup\left(\frac{\gamma_2}{\gamma_1},1\right)}(x)\,\mathrm{d}x, & \alpha<\beta, \\ \frac{\alpha\gamma_1^2\sqrt{1-x^2}}{2\pi(1-\gamma_1^2x^2)}\chi_{(-1,1)}(x)\,\mathrm{d}x, & \alpha=\beta. \end{cases}
\end{equation}
If $\alpha>\beta$, however, then $\mu$ consists of an absolutely continuous part given by
\begin{equation}\label{eq:measuremodifiedKarlinMcGregorabscont2}
\frac{\beta\gamma_1\sqrt{(1-x^2)(\gamma_1^2x^2-\gamma_2^2)}}{2\pi|x|(1-\gamma_1^2x^2)}\chi_{\left(-1,-\frac{\gamma_2}{\gamma_1}\right)\cup\left(\frac{\gamma_2}{\gamma_1},1\right)}(x)\,\mathrm{d}x
\end{equation}
and a discrete part given by
\begin{equation}\label{eq:measuremodifiedKarlinMcGregordisc}
\mu(\{0\})=\frac{\alpha-\beta}{\alpha}.
\end{equation}
Plots for different values of $(\alpha,\beta)$ can be found in Figure~\ref{fig:orthmeasures}.\\

\begin{figure}
\centering
\includegraphics[width=1.00\textwidth]{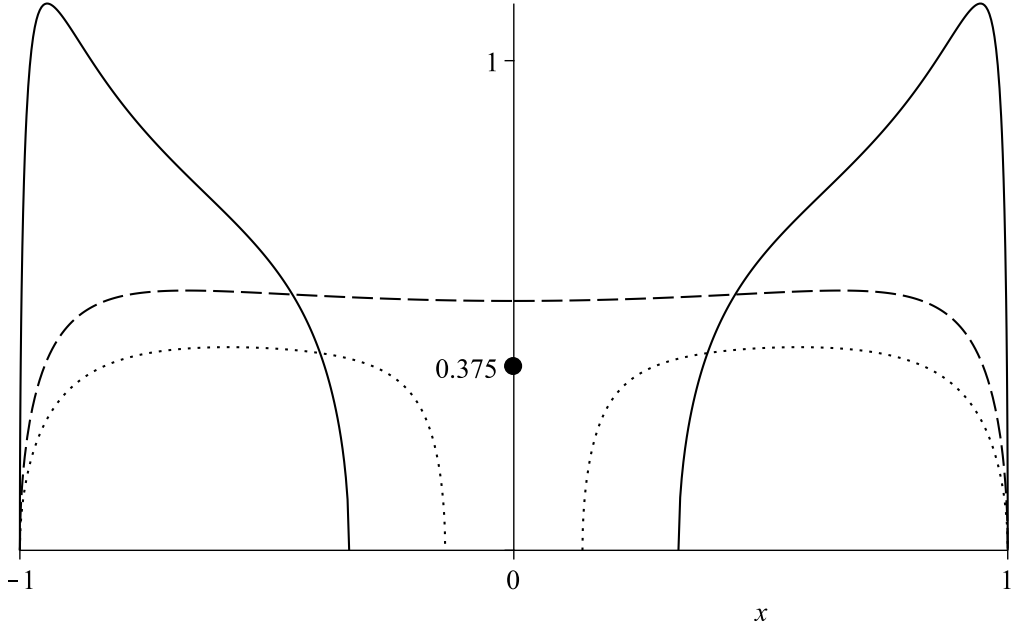}
\caption{The absolutely continuous part of $\mu$, cf. \eqref{eq:measuremodifiedKarlinMcGregorabscont} and \eqref{eq:measuremodifiedKarlinMcGregorabscont2}, for $(\alpha,\beta)=(2,5)$ (solid line), $(\alpha,\beta)=(5,5)$ (dashed line) and $(\alpha,\beta)=(8,5)$ (dotted line). In the letter case, there is an additional discrete part with $\mu(\{0\})=3/8$, cf. \eqref{eq:measuremodifiedKarlinMcGregordisc}.}\label{fig:orthmeasures}
\end{figure}

By construction, the Haar weights corresponding to the modified polynomials $(P_n(x))_{n\in\mathbb{N}_0}$ and the Haar weights corresponding to the original Karlin--McGregor polynomials $(K_n^{(\alpha,\beta)}(x))_{n\in\mathbb{N}_0}$ are linked to each other by multiplication with $(K_n^{(\alpha,\beta)}(\gamma_1))^2$. Using \eqref{eq:karlinmcgregorhodd} to \eqref{eq:karlinmcgregorgamma1odd}, we obtain that the Haar weights associated with $(P_n(x))_{n\in\mathbb{N}_0}$ satisfy $h(0)=1$ and
\begin{equation}\label{eq:modifiedkarlinmcgregorhodd}
h(2n-1)=\frac{1}{\beta}\left[\left(\frac{\alpha-2}{\sqrt{\alpha-1}}+\frac{\beta-2}{\sqrt{\beta-1}}\right)\cdot(n-1)+\sqrt{\alpha-1}+\sqrt{\beta-1}\right]^2
\end{equation}
and
\begin{equation}\label{eq:modifiedkarlinmcgregorheven}
h(2n)=\frac{1}{\beta}\left[\left(\frac{\alpha-2}{\sqrt{\alpha-1}}+\frac{\beta-2}{\sqrt{\beta-1}}\right)\cdot n+\frac{\beta}{\sqrt{\beta-1}}\right]^2
\end{equation}
for $n\in\mathbb{N}$. Observe that $h$ is always of quadratic (and therefore subexponential) growth, which particularly implies that $\widehat{\mathbb{N}_0}=\mathrm{supp}\;\mu$ now as desired. Plots can be found in Figure~\ref{fig:hmeasures}.\\

Via \eqref{eq:hrec}, \eqref{eq:modifiedkarlinmcgregorhodd} and \eqref{eq:modifiedkarlinmcgregorheven}, we can recursively compute the recurrence coefficients $(c_n)_{n\in\mathbb{N}}$ which correspond to the modified polynomials $(P_n(x))_{n\in\mathbb{N}_0}$. Alternatively, one can compute $(c_n)_{n\in\mathbb{N}}$ from \eqref{eq:karlinmcgregorgamma1even} and \eqref{eq:karlinmcgregorgamma1odd} because the recurrence coefficients are linked to those belonging to $(K_n^{(\alpha,\beta)}(x))_{n\in\mathbb{N}_0}$ by multiplication with $K_{n-1}^{(\alpha,\beta)}(\gamma_1)/(\gamma_1K_n^{(\alpha,\beta)}(\gamma_1))$. We obtain
\begin{align*}
c_{2n-1}&=\frac{\sqrt{\beta-1}}{\sqrt{\alpha-1}+\sqrt{\beta-1}}\\
&\quad\times\left[1-\sqrt{\alpha-1}\cdot\frac{\sqrt{\alpha-1}\sqrt{\beta-1}-1}{((\alpha-2)\sqrt{\beta-1}+(\beta-2)\sqrt{\alpha-1})\cdot n+\sqrt{\alpha-1}+\sqrt{\beta-1}}\right],\\
c_{2n}&=\frac{\sqrt{\alpha-1}}{\sqrt{\alpha-1}+\sqrt{\beta-1}}\left[1-\sqrt{\beta-1}\cdot\frac{\sqrt{\alpha-1}\sqrt{\beta-1}-1}{((\alpha-2)\sqrt{\beta-1}+(\beta-2)\sqrt{\alpha-1})\cdot n+\beta\sqrt{\alpha-1}}\right]
\end{align*}
and particularly
\begin{align*}
\lim_{n\to\infty}c_{2n-1}&=\frac{\sqrt{\beta-1}}{\sqrt{\alpha-1}+\sqrt{\beta-1}},\\
\lim_{n\to\infty}c_{2n}&=\frac{\sqrt{\alpha-1}}{\sqrt{\alpha-1}+\sqrt{\beta-1}}.
\end{align*}
Therefore, both $(c_{2n-1})_{n\in\mathbb{N}}$ and $(c_{2n})_{n\in\mathbb{N}}$ are nondecreasing but $(c_n)_{n\in\mathbb{N}}$ is not bounded from above by $1/2$ unless $\alpha=\beta$. In particular, Theorem~\ref{thm:szwarcnonnegII} cannot be applied as soon as $\alpha\neq\beta$.\\

For every $n\in\mathbb{N}$, we compute
\begin{equation*}
\alpha_n=\begin{cases} \frac{\sqrt{\beta}}{\sqrt{\alpha-1}+\sqrt{\beta-1}}, & n=1, \\ \frac{\sqrt{\alpha-1}}{\sqrt{\alpha-1}+\sqrt{\beta-1}}, & n\;\mbox{even}, \\ \frac{\sqrt{\beta-1}}{\sqrt{\alpha-1}+\sqrt{\beta-1}}, & \mbox{else}, \end{cases}
\end{equation*}
so the coefficients in the orthonormal/monic normalization become periodic. This shows that $(P_n(x))_{n\in\mathbb{N}_0}$ belongs to the class of Geronimus polynomials \cite{Le00} and that nonnegative linearization of products also follows directly from a general criterion in \cite{Sz94b} (without using nonnegative linearization of products for the Karlin--McGregor polynomials): if $\alpha\leq\beta$, then \cite[Theorem 3 (i)]{Sz94b} can be applied, and if $\alpha>\beta$, then \cite[Theorem 3 (ii) combined with Remark 3]{Sz94b} works. For the special case $\alpha=\beta$, nonnegative linearization of products also follows from \cite[3 (g) (i)]{La83} (via explicit formulas for the linearization coefficients of the corresponding Geronimus polynomials).\\

Moreover, our construction based on the Karlin--McGregor polynomials makes it possible to compute the linearization coefficients $g(m,n;k)$ explicitly because the linearization coefficients belonging to the Karlin--McGregor polynomials are explicitly known \cite[Theorem 4.1]{Eh94} and just have to be multiplied by $K_k^{(\alpha,\beta)}(\gamma_1)/(K_m^{(\alpha,\beta)}(\gamma_1)K_n^{(\alpha,\beta)}(\gamma_1))$ (which is explicitly known due to \eqref{eq:karlinmcgregorgamma1even} and \eqref{eq:karlinmcgregorgamma1odd}). We refrain from stating the corresponding formulas, however.\\

Coming back to our necessary condition \eqref{eq:crucialnecessary} on the dual space $\widehat{\mathbb{N}_0}$ from the beginning of the section, we see from \eqref{eq:measuremodifiedKarlinMcGregorabscont} to \eqref{eq:measuremodifiedKarlinMcGregordisc} that the modified polynomials $(P_n(x))_{n\in\mathbb{N}_0}$ satisfy this condition if and only if $\alpha<\beta$. We now also come back to the problem ``$h(1)<2$'' and first observe that if one is not interested in the full function $h$ but only in $h(1)$, then one can also argue in the following way which is just based on the first recurrence coefficient $c_1$ and less computational than the proof of the full equations \eqref{eq:modifiedkarlinmcgregorhodd}, \eqref{eq:modifiedkarlinmcgregorheven}: $c_1$ satisfies
\begin{equation*}
x^2=(1-c_1)P_2(x)+c_1,
\end{equation*}
so
\begin{equation*}
h(1)=\frac{1}{c_1}=1-\frac{1}{P_2(0)}=1-\frac{K_2^{(\alpha,\beta)}(\gamma_1)}{K_2^{(\alpha,\beta)}(0)}.
\end{equation*}
Since
\begin{equation*}
K_2^{(\alpha,\beta)}(x)=\frac{\alpha x^2-1}{\alpha-1},
\end{equation*}
we obtain
\begin{equation*}
h(1)=\alpha\gamma_1^2=\frac{1}{\beta}(\sqrt{\alpha-1}+\sqrt{\beta-1})^2
\end{equation*}
(cf. \eqref{eq:modifiedkarlinmcgregorhodd}). In particular, one has $h(1)<2$ if and only if $\alpha<3\beta-2\sqrt{2\beta^2-2\beta}$. Since $[2,3\beta-2\sqrt{2\beta^2-2\beta})$ is a (proper) subset of $[2,\beta)$, the corresponding measures are absolutely continuous with density given by the first case of \eqref{eq:measuremodifiedKarlinMcGregorabscont}. Moreover, we obtain that the necessary condition \eqref{eq:crucialnecessary} is not sufficient for $h(1)<2$.

\begin{figure}
\centering
\includegraphics[width=0.60\textwidth]{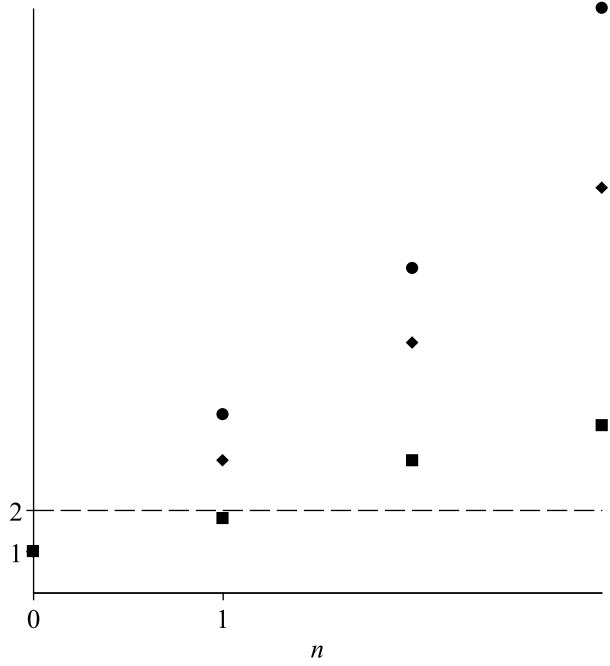}
\caption{The first Haar weights $h(n)$ belonging to $(P_n(x))_{n\in\mathbb{N}_0}$, cf. \eqref{eq:modifiedkarlinmcgregorhodd} and \eqref{eq:modifiedkarlinmcgregorheven}, for $(\alpha,\beta)=(2,5)$ (box), $(\alpha,\beta)=(5,5)$ (diamond) and $(\alpha,\beta)=(8,5)$ (circle). In the first case, one has $h(1)<2$.}\label{fig:hmeasures}
\end{figure}

\begin{theorem}\label{thm:smallerthanoneplusepsilon}
Let $\alpha,\beta\geq2$, and let $P_n(x)=K_n^{(\alpha,\beta)}(\gamma_1x)/K_n^{(\alpha,\beta)}(\gamma_1)\;(n\in\mathbb{N}_0)$.
\begin{enumerate}[(i)]
\item For every $\epsilon\in(0,1)$, there exists a polynomial hypergroup on $\mathbb{N}_0$ such that $h(1)=1+\epsilon$. More precisely, for any choice of $\alpha$ the parameter $\beta$ can be chosen in such a way that the hypergroup induced by the sequence $(P_n(x))_{n\in\mathbb{N}_0}$ has the desired property; for this example type, the dual space $\widehat{\mathbb{N}_0}$ is of the form $[-1,-1+\delta]\cup[1-\delta,1]$ with $\delta\in(0,1-\sqrt{(1-\epsilon)/(1+\epsilon)}]$. Furthermore, for $\alpha=2$ and $\beta=(2+2\sqrt{1-\epsilon^2})/\epsilon^2$ one additionally has that $\widehat{\mathbb{N}_0}$ equals the maximal possible set $[-1,-\sqrt{(1-\epsilon)/(1+\epsilon)}]\cup[\sqrt{(1-\epsilon)/(1+\epsilon)},1]$ (cf. Proposition~\ref{prp:dualspace} and Remark~\ref{rem:dualspace}).
\item For any choice of $\alpha,\beta$, the polynomial hypergroup induced by $(P_n(x))_{n\in\mathbb{N}_0}$ satisfies $h(n)\geq2$ for all $n\geq2$. Moreover, $h$ is nondecreasing and of quadratic growth.
\end{enumerate}
\end{theorem}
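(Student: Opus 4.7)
The plan is to exploit the explicit Haar-weight formulas \eqref{eq:modifiedkarlinmcgregorhodd} and \eqref{eq:modifiedkarlinmcgregorheven} together with the compact identity $h(1)=(\sqrt{\alpha-1}+\sqrt{\beta-1})^2/\beta$ already derived in the preceding discussion. I would dispatch part (ii) first, because part (i) relies on the fact that $h$ is of subexponential growth in order to identify $\widehat{\mathbb{N}_0}$ with $\mathrm{supp}\;\mu$.

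For part (ii), I write the explicit formulas in the shape
\begin{equation*}
h(2n-1)=\frac{(A(n-1)+B)^2}{\beta},\qquad h(2n)=\frac{(An+C)^2}{\beta},
\end{equation*}
where $A:=(\alpha-2)/\sqrt{\alpha-1}+(\beta-2)/\sqrt{\beta-1}\geq 0$, $B:=\sqrt{\alpha-1}+\sqrt{\beta-1}$ and $C:=\beta/\sqrt{\beta-1}$. Monotonicity of $h$ reduces to the two inequalities $A+C\geq B$ (for $h(2n)\geq h(2n-1)$) and $B\geq C$ (for $h(2n+1)\geq h(2n)$); elementary manipulations show that both are equivalent to $(\alpha-1)(\beta-1)\geq 1$, which holds throughout the parameter range $\alpha,\beta\geq 2$. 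The inequality $h(2)\geq 2$ follows by dropping the nonnegative contribution $(\alpha-2)/\sqrt{\alpha-1}$, yielding $h(2)\geq 4(\beta-1)/\beta\geq 2$, and then monotonicity extends $h(n)\geq 2$ to every $n\geq 2$. Quadratic growth is manifest from the closed form.

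For part (i), I fix $\alpha\geq 2$ and view $h(1)$ as a function of $\beta\in[2,\infty)$. Differentiation shows that the sign of $h(1)'(\beta)$ is that of $1/\sqrt{\beta-1}-\sqrt{\alpha-1}$, so $h(1)$ is strictly decreasing whenever $(\alpha-1)(\beta-1)>1$; at $\beta=2$ one has $h(1)=(\sqrt{\alpha-1}+1)^2/2\geq 2$, while $h(1)\to 1$ as $\beta\to\infty$. The intermediate value theorem then produces, for every $\epsilon\in(0,1)$, a (unique) $\beta>2$ with $h(1)=1+\epsilon$; necessarily $\beta>\alpha$, which rules out both the $\alpha=\beta$ degeneracy and a point mass at $0$. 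By part (ii), $h$ has subexponential growth, so $\widehat{\mathbb{N}_0}=\mathrm{supp}\;\mu$, and \eqref{eq:measuremodifiedKarlinMcGregorabscont} gives $\widehat{\mathbb{N}_0}=[-1,-\gamma_2/\gamma_1]\cup[\gamma_2/\gamma_1,1]$, so $\delta:=1-\gamma_2/\gamma_1\in(0,1)$.

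The maximality assertion then reduces to the algebraic identity
\begin{equation*}
\bigl(\gamma_2/\gamma_1\bigr)^2-\frac{1-\epsilon}{1+\epsilon}=\frac{2(\alpha-2)}{\alpha+\beta-2+2\sqrt{(\alpha-1)(\beta-1)}},
\end{equation*}
which I would derive by substituting $h(1)=1+\epsilon=(\alpha+\beta-2+2\sqrt{(\alpha-1)(\beta-1)})/\beta$ and the corresponding expression for $(\gamma_2/\gamma_1)^2$, then comparing numerators over the common denominator. The right-hand side is nonnegative and vanishes precisely when $\alpha=2$, which gives $\delta\leq 1-\sqrt{(1-\epsilon)/(1+\epsilon)}$ in general and equality for $\alpha=2$. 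In that borderline case, solving $h(1)=1+\epsilon$ becomes $2\sqrt{\beta-1}=\beta\epsilon$, whose larger root is $\beta=(2+2\sqrt{1-\epsilon^2})/\epsilon^2\geq 2$. The main technical obstacle is keeping the radicals in the identity for $(\gamma_2/\gamma_1)^2-(1-\epsilon)/(1+\epsilon)$ organized cleanly; once one observes that the denominators of both fractions are already equal to $\beta\gamma_1^2\cdot\alpha\beta/(\alpha\beta)$ multiplied by a common factor, everything collapses to the displayed two-line computation.
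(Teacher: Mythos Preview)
Your proposal is correct and follows essentially the same approach as the paper. The monotonicity argument in (ii) is identical in content---both reduce to $(\alpha-1)(\beta-1)\geq1$---and the estimate $h(2)\geq4(\beta-1)/\beta\geq2$ is exactly the paper's. For (i), the paper simply notes $h(1)\to1$ as $\beta\to\infty$ and invokes continuity, then obtains the bound $\delta\leq1-\sqrt{(1-\epsilon)/(1+\epsilon)}$ by citing Proposition~\ref{prp:dualspace}; your explicit identity $(\gamma_2/\gamma_1)^2-(1-\epsilon)/(1+\epsilon)=2(\alpha-2)/(\sqrt{\alpha-1}+\sqrt{\beta-1})^2$ is a self-contained refinement that yields the bound and the $\alpha=2$ equality case in one stroke, and your differentiation argument additionally gives uniqueness of $\beta$, which the paper does not claim.
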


\begin{proof}
\begin{enumerate}[(i)]
\item Let $\epsilon\in(0,1)$, and let $\alpha\geq2$ be arbitrary. Then, by the preceding calculations,
\begin{equation*}
h(1)=\frac{1}{\beta}(\sqrt{\alpha-1}+\sqrt{\beta-1})^2\to1\;(\beta\to\infty).
\end{equation*}
This yields that $\beta$ can be chosen such that $h(1)=1+\epsilon$. By the preceding considerations and Proposition~\ref{prp:dualspace}, $\widehat{\mathbb{N}_0}=\mathrm{supp}\;\mu=[-1,-1+\delta]\cup[1-\delta,1]$ with $\delta\in(0,1-\sqrt{(1-\epsilon)/(1+\epsilon)}]$.\\

Now let $\alpha=2$ and $\beta=(2+2\sqrt{1-\epsilon^2})/\epsilon^2$. Then $h(1)=1+\epsilon$ and
\begin{equation*}
\widehat{\mathbb{N}_0}=\mathrm{supp}\;\mu=\left[-1,-\frac{\gamma_2}{\gamma_1}\right]\cup\left[\frac{\gamma_2}{\gamma_1},1\right]=\left[-1,-\sqrt{\frac{1-\epsilon}{1+\epsilon}}\right]\cup\left[\sqrt{\frac{1-\epsilon}{1+\epsilon}},1\right].
\end{equation*}
\item For every $n\in\mathbb{N}$, the explicit formulas \eqref{eq:modifiedkarlinmcgregorhodd} and \eqref{eq:modifiedkarlinmcgregorheven} for $h$ yield
\begin{align*}
\frac{h(2n)}{h(2n-1)}&=\left[1+\sqrt{\beta-1}\cdot\frac{\sqrt{\alpha-1}\sqrt{\beta-1}-1}{((\alpha-2)\sqrt{\beta-1}+(\beta-2)\sqrt{\alpha-1})\cdot n+\sqrt{\alpha-1}+\sqrt{\beta-1}}\right]^2\\
&\geq1
\end{align*}
and
\begin{equation*}
\frac{h(2n+1)}{h(2n)}=\left[1+\sqrt{\alpha-1}\cdot\frac{\sqrt{\alpha-1}\sqrt{\beta-1}-1}{((\alpha-2)\sqrt{\beta-1}+(\beta-2)\sqrt{\alpha-1})\cdot n+\beta\sqrt{\alpha-1}}\right]^2\geq1
\end{equation*}
for every $n\in\mathbb{N}$, which shows that $h$ is nondecreasing. Hence, by \eqref{eq:modifiedkarlinmcgregorheven} we have
\begin{align*}
h(n)&\geq h(2)\\
&=\frac{1}{\beta}\left[\frac{\alpha-2}{\sqrt{\alpha-1}}+\frac{\beta-2}{\sqrt{\beta-1}}+\frac{\beta}{\sqrt{\beta-1}}\right]^2\\
&\geq\frac{1}{\beta}\left[\frac{\beta-2}{\sqrt{\beta-1}}+\frac{\beta}{\sqrt{\beta-1}}\right]^2\\
&=4\cdot\frac{\beta-1}{\beta}\\
&\geq2
\end{align*}
for all $n\geq2$. We have already observed that $h$ is of quadratic growth.
\end{enumerate}
\end{proof}

\begin{corollary}\label{cor:smallerthanoneplusepsilon}
The converse of Theorem~\ref{thm:greaterthantwo} and the converses of Corollary~\ref{cor:greaterthantwoconcoeff} to Corollary~\ref{cor:nevai} are not true.
\end{corollary}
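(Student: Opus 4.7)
The plan is to exhibit a single family of counterexamples that simultaneously refutes all six converses. Specifically, I would invoke the modified Karlin--McGregor sequence $(P_n(x))_{n\in\mathbb{N}_0}$ from Theorem~\ref{thm:smallerthanoneplusepsilon} with parameters $\alpha,\beta$ chosen so that $2\leq\alpha<\beta$ and $(\sqrt{\alpha-1}+\sqrt{\beta-1})^2\geq2\beta$; the pair $(\alpha,\beta)=(3,5)$ is a concrete choice that works. The strict inequality $\alpha<\beta$ is what will force the dual space and the recurrence coefficients to behave ``nontrivially'', while the second inequality is exactly what is needed to guarantee $h(1)\geq2$.

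The first step is to verify $h(n)\geq2$ for every $n\in\mathbb{N}$: for $n\geq2$ this is immediate from Theorem~\ref{thm:smallerthanoneplusepsilon}(ii), and for $n=1$ it follows from the identity $h(1)=\alpha\gamma_1^2=(\sqrt{\alpha-1}+\sqrt{\beta-1})^2/\beta$ derived in the paragraph preceding Theorem~\ref{thm:smallerthanoneplusepsilon}, combined with the chosen parameter inequality. The second step is to check that, for this example, none of the hypotheses of the six converses in question is satisfied. Since $\alpha\neq\beta$ one has $\gamma_2>0$, so $\widehat{\mathbb{N}_0}=\mathrm{supp}\;\mu=[-1,-\gamma_2/\gamma_1]\cup[\gamma_2/\gamma_1,1]$ is a proper subset of $[-1,1]$ and is not of the symmetric-interval form $[-a,a]$; this kills the hypotheses of Theorem~\ref{thm:greaterthantwo} and of Corollary~\ref{cor:support} at once, and via \eqref{eq:dualrelation} it also shows that $(P_n(x))_{n\in\mathbb{N}_0}$ cannot be uniformly bounded on $[-1,1]$, so the hypothesis of Corollary~\ref{cor:bound} fails as well. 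The hypothesis of Corollary~\ref{cor:greaterthantwoconcoeff} fails because the alternative argument in its proof, read backward, says that nonnegativity of all $C_n(k)$ would already force $\widehat{\mathbb{N}_0}=[-1,1]$. Finally, the explicit limits of $(c_{2n-1})_{n\in\mathbb{N}}$ and $(c_{2n})_{n\in\mathbb{N}}$ computed just before Theorem~\ref{thm:smallerthanoneplusepsilon} differ whenever $\alpha\neq\beta$, so $(c_n)_{n\in\mathbb{N}}$ is not convergent, and the periodic sequence $(\alpha_n)_{n\in\mathbb{N}}$ does not converge either; this disposes of both Corollary~\ref{cor:greaterthantwo} and Corollary~\ref{cor:nevai}.

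The only genuinely numerical point to dispatch is the pointwise inequality $(\sqrt{\alpha-1}+\sqrt{\beta-1})^2\geq 2\beta$ for a concrete pair, but this is a one-line estimate (for $(\alpha,\beta)=(3,5)$ one has $(\sqrt{2}+2)^2=6+4\sqrt{2}>10$) and is not a real obstacle. All remaining ingredients are already assembled in Theorem~\ref{thm:smallerthanoneplusepsilon} and the surrounding computations, so the argument will amount to organized bookkeeping against those facts.
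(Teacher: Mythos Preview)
Your proposal is correct and follows essentially the same route as the paper: both use the modified Karlin--McGregor polynomials $P_n(x)=K_n^{(\alpha,\beta)}(\gamma_1 x)/K_n^{(\alpha,\beta)}(\gamma_1)$ with $\alpha\neq\beta$ and with parameters chosen so that $h(1)\geq2$, then invoke Theorem~\ref{thm:smallerthanoneplusepsilon}(ii) for $h(n)\geq2\;(n\geq2)$ and observe $\widehat{\mathbb{N}_0}\neq[-1,1]$. The paper is slightly more economical in that, instead of checking each of the five corollaries' hypotheses separately, it simply notes that every one of those hypotheses was shown (in the respective proofs) to imply $\widehat{\mathbb{N}_0}=[-1,1]$, so a single violation of the latter kills all five converses at once; also, the paper allows any $\alpha\neq\beta$ with $\alpha\geq3\beta-2\sqrt{2\beta^2-2\beta}$, whereas your additional restriction $\alpha<\beta$ is harmless but not needed.
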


\begin{proof}
Let $\beta\geq2$ and $\alpha\geq3\beta-2\sqrt{2\beta^2-2\beta}$ with $\alpha\neq\beta$, and let $P_n(x)=K_n^{(\alpha,\beta)}(\gamma_1x)/K_n^{(\alpha,\beta)}(\gamma_1)\;(n\in\mathbb{N}_0)$. Then, as consequence of the second part of Theorem~\ref{thm:smallerthanoneplusepsilon} and the preceding notes, we have $h(n)\geq2$ for all $n\in\mathbb{N}$ but $\widehat{\mathbb{N}_0}\neq[-1,1]$ (in fact, for $\alpha\in[3\beta-2\sqrt{2\beta^2-2\beta},\beta)$ we do not even have $0\in\widehat{\mathbb{N}_0}$). This shows that the converse of Theorem~\ref{thm:greaterthantwo} is not true. The latter implies that the converses of Corollary~\ref{cor:greaterthantwoconcoeff} to Corollary~\ref{cor:nevai} are also not true.
\end{proof}

We briefly consider the special case $\alpha=2$. As a consequence of \eqref{eq:kmspecialodd} and \eqref{eq:kmspecialeven}, in this special case we have
\begin{align}
\label{eq:kmspecialoddmod} P_{2n-1}(x)&=\frac{1}{\sqrt{\beta-1}n-(n-1)}x\\
\notag &\quad\times\left(\sqrt{\beta-1}U_{n-1}\left(\frac{(2\sqrt{\beta-1}+\beta)x^2-\beta}{2\sqrt{\beta-1}}\right)-U_{n-2}\left(\frac{(2\sqrt{\beta-1}+\beta)x^2-\beta}{2\sqrt{\beta-1}}\right)\right)
\end{align}
and
\begin{align}
\label{eq:kmspecialevenmod} P_{2n}(x)&=\frac{1}{(\beta-2)n+\beta}\\
\notag &\quad\times\left((\beta-1)U_n\left(\frac{(2\sqrt{\beta-1}+\beta)x^2-\beta}{2\sqrt{\beta-1}}\right)-U_{n-2}\left(\frac{(2\sqrt{\beta-1}+\beta)x^2-\beta}{2\sqrt{\beta-1}}\right)\right)
\end{align}
for every $n\in\mathbb{N}$. Choosing $(\alpha,\beta)=(2,5)$ in Theorem~\ref{thm:smallerthantwo} was motivated by getting ``nice values'' for $\sqrt{\alpha-1}$ and $\sqrt{\beta-1}$. In particular, this special choice allowed us to obtain very simple explicit expressions for the sequence $(c_n)_{n\in\mathbb{N}}$. Due to \eqref{eq:measuremodifiedKarlinMcGregorabscont}, the density takes the simple form
\begin{equation*}
\mathrm{d}\mu(x)=\frac{15\sqrt{(1-x^2)(9x^2-1)}}{2\pi|x|(10-9x^2)}\chi_{\left(-1,-\frac{1}{3}\right)\cup\left(\frac{1}{3},1\right)}(x)\,\mathrm{d}x
\end{equation*}
in this case (cf. the solid line in Figure~\ref{fig:orthmeasures}), so the dual space is given by
\begin{equation*}
\widehat{\mathbb{N}_0}=\mathrm{supp}\;\mu=\left[-1,-\frac{1}{3}\right]\cup\left[\frac{1}{3},1\right],
\end{equation*}
which is the maximal possible dual space $\widehat{\mathbb{N}_0}$ for a polynomial hypergroup with $h(1)=9/5$ (cf. Proposition~\ref{prp:dualspace} and Remark~\ref{rem:dualspace}). Furthermore, due to \eqref{eq:kmspecialoddmod} and \eqref{eq:kmspecialevenmod} the polynomials take the simple form
\begin{align*}
P_{2n-1}(x)&=\frac{1}{n+1}x\left(2U_{n-1}\left(\frac{9x^2-5}{4}\right)-U_{n-2}\left(\frac{9x^2-5}{4}\right)\right),\\
P_{2n}(x)&=\frac{1}{3n+5}\left(4U_n\left(\frac{9x^2-5}{4}\right)-U_{n-2}\left(\frac{9x^2-5}{4}\right)\right)
\end{align*}
for all $n\in\mathbb{N}$.\\

We finally construct another type of polynomial hypergroups with $h(1)<2$ (and even ``$h(1)=1+\epsilon$''). It does not rely on the Karlin--McGregor polynomials; the dual space $\widehat{\mathbb{N}_0}$ is discrete, and $h$ is of exponential growth now.

\begin{theorem}\label{thm:smallerthanoneplusepsilon2}
Let $(\lambda_n)_{n\in\mathbb{N}_0}\subseteq(0,1)$ with $\lim_{n\to\infty}\lambda_{2n}=1$ satisfy both $\lambda_{2n-2}+\lambda_{2n-1}\leq\lambda_{2n}$ and $\lambda_{2n-1}+\lambda_{2n}\leq\lambda_{2n+2}$ for every $n\in\mathbb{N}$, and let $(Q_n(x))_{n\in\mathbb{N}_0}\subseteq\mathbb{R}[x]$ be given by the recurrence relation $Q_0(x)=1$, $Q_1(x)=x/\lambda_0$,
\begin{equation*}
x Q_n(x)=\lambda_n Q_{n+1}(x)+\lambda_{n-1}Q_{n-1}(x)\;(n\in\mathbb{N}).
\end{equation*}
The following hold:
\begin{enumerate}[(i)]
\item The sequence $(Q_n(1))_{n\in\mathbb{N}_0}$ is strictly positive and strictly increasing.
\item The sequence $(P_n(x))_{n\in\mathbb{N}_0}$ defined by $P_n(x):=Q_n(x)/Q_n(1)\;(n\in\mathbb{N}_0)$ satisfies nonnegative linearization of products, and $(Q_n(x))_{n\in\mathbb{N}_0}$ are the orthonormal polynomials which correspond to $(P_n(x))_{n\in\mathbb{N}_0}$.
\item The dual space $\widehat{\mathbb{N}_0}$ satisfies
\begin{equation*}
\widehat{\mathbb{N}_0}=\mathrm{supp}\;\mu=\{\pm1\}\cup\{\pm x_n:n\in\mathbb{N}\}
\end{equation*}
with a strictly increasing sequence $(x_n)_{n\in\mathbb{N}}\subseteq(0,1)$ with $\lim_{n\to\infty}x_n=1$.
\item For every $\epsilon\in(0,1)$, the sequence $(\lambda_n)_{n\in\mathbb{N}_0}$ can be chosen in such a way that the polynomial hypergroup induced by $(P_n(x))_{n\in\mathbb{N}_0}$ fulfills $h(1)=1+\epsilon$; in that case, one has $(x_n)_{n\in\mathbb{N}}\subseteq[\sqrt{(1-\epsilon)/(1+\epsilon)},1)$. An explicit construction is as follows: let $(s_n)_{n\in\mathbb{N}_0}\subseteq(0,1)$ be any null sequence which is convex (i.e., $s_{n+1}\leq(s_n+s_{n+2})/2$ for all $n\in\mathbb{N}_0$). Then the sequence $(\lambda_n)_{n\in\mathbb{N}_0}\subseteq\mathbb{R}$ given by
\begin{equation*}
\lambda_n:=\begin{cases} 1-s_{\frac{n}{2}}, & n\;\mbox{even}, \\ s_{\frac{n+1}{2}}-s_{\frac{n+3}{2}}, & n\;\mbox{odd} \end{cases}
\end{equation*}
satisfies the conditions above, and if $s_0=1-1/\sqrt{1+\epsilon}$, then $h(1)=1+\epsilon$.\footnote{Our construction is motivated by similar (but less general) ideas in \cite[Remark 1 p. 427]{MS01}.}
\item For any choice of $(\lambda_n)_{n\in\mathbb{N}_0}$, the polynomial hypergroup induced by $(P_n(x))_{n\in\mathbb{N}_0}$ satisfies $h(n)>4$ for all $n\geq2$. Moreover, $h$ is strictly increasing and of exponential growth.
\end{enumerate}
\end{theorem}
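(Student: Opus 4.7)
The plan is to organize the proof around the ratios $r_n := Q_n(1)/Q_{n-1}(1)$, which I will show satisfy $r_n > 1$ for every $n \in \mathbb{N}$. Setting $x=1$ in the recurrence gives $\lambda_{n-1} r_n + \lambda_{n-2}/r_{n-1} = 1$ for $n \geq 2$, with initial value $r_1 = 1/\lambda_0 > 1$. Inducting, the bound $r_{n-1} > 1$ produces $\lambda_{n-1} r_n > 1 - \lambda_{n-2}$; combining the two standing hypotheses (use the first when $n-2$ is even and the second when $n-2$ is odd, bounding the sum by $\lambda_m < 1$ for the appropriate even index $m$) yields $\lambda_{n-2} + \lambda_{n-1} < 1$, whence $r_n > 1$. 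This proves (i). Favard's theorem applied to the symmetric recurrence orthonormalises $(Q_n(x))_{n\in\mathbb{N}_0}$ with respect to some probability measure $\mu$, and dividing through by $Q_n(1)$ produces a symmetric recurrence for $(P_n)$ with positive coefficients $a_n, c_n$ satisfying $a_n + c_n = 1$; hence $(Q_n)$ is the orthonormal sequence associated to $(P_n)$ and $h(n) = Q_n(1)^2$. Nonnegative linearization of products is then obtained by invoking the M\l otkowski--Szwarc criterion cited in the footnote, whose hypotheses on the orthonormal recurrence coefficients translate directly into our super-additivity relations on $(\lambda_n)$. This completes (ii) and, together with (i), yields the strict monotonicity of $h$ asserted in (v).

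Part (iii) rests on a compact-perturbation analysis of the Jacobi matrix $J$ with vanishing diagonal and off-diagonal entries $(\lambda_n)$. The hypotheses force $\lambda_{2n-1} \leq \lambda_{2n} - \lambda_{2n-2} \to 0$, so $J$ differs from the block-diagonal matrix $J_0$ built from copies of $\bigl(\begin{smallmatrix}0 & 1\\ 1 & 0\end{smallmatrix}\bigr)$ by a compact operator. Since $\sigma(J_0) = \{\pm 1\}$, Weyl's theorem yields $\sigma_{\mathrm{ess}}(J) = \{\pm 1\}$, and $\mathrm{supp}\,\mu = \sigma(J)$ therefore consists of $\{\pm 1\}$ together with discrete eigenvalues that can only accumulate at $\pm 1$. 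The vanishing diagonal forces spectral symmetry about $0$, and $|\mathrm{supp}\,\mu| = \infty$ because the polynomial sequence is infinite, giving the stated form $\mathrm{supp}\,\mu = \{\pm 1\} \cup \{\pm x_n : n \in \mathbb{N}\}$ with $x_n \nearrow 1$. The inclusion $\mathrm{supp}\,\mu \subseteq \widehat{\mathbb{N}_0}$ is \eqref{eq:consequencegelfand}; for the reverse, the central gap is handled by Proposition~\ref{prp:dualspace}, and for $x$ in any remaining gap a transfer-matrix/subordinacy argument, combined with the exponential growth of $Q_n(1)$ from the last part of (v), yields $|P_n(x)| = |Q_n(x)/Q_n(1)| \to \infty$.

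For (iv), the identity $c_1 = \lambda_0^2$ (read off from $Q_1(1) = 1/\lambda_0$) gives $h(1) = 1/\lambda_0^2$, so $h(1) = 1+\epsilon$ is equivalent to $\lambda_0 = 1/\sqrt{1+\epsilon}$, i.e.\ $s_0 = 1 - 1/\sqrt{1+\epsilon}$. That the explicit formula for $(\lambda_n)$ satisfies the standing hypotheses is a direct computation: the second inequality becomes equality, while the first reduces to the midpoint convexity of $(s_n)$. The prescribed location of $(x_n)$ then follows from Proposition~\ref{prp:dualspace} together with (iii). The bound $h(2) > 4$ in (v) is immediate from $h(2) = (1-\lambda_0^2)^2/(\lambda_0^2 \lambda_1^2)$ and $\lambda_1 < 1 - \lambda_0$ (which follows from the first hypothesis at $n=1$ combined with $\lambda_2 < 1$): the factorisation $(1+\lambda_0)^2 > (2\lambda_0)^2$ produces $(1-\lambda_0^2)^2 > 4\lambda_0^2 (1-\lambda_0)^2 > 4\lambda_0^2 \lambda_1^2$, and the strict monotonicity from (i) propagates this to $h(n) > 4$ for all $n \geq 2$. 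The main obstacle will be the exponential growth claim: starting from the estimate $r_{2k} > \mu_{k-1}/\lambda_{2k-1}$ (with $\mu_k := 1 - \lambda_{2k}$) and $\lambda_{2k-1} \leq \min(\mu_{k-1} - \mu_k,\, \mu_k - \mu_{k+1})$, I expect the decisive ingredient to be a pair-wise multiplicative lower bound $r_{2k} r_{2k+1} \geq c > 1$; extracting such a bound from the super-additivity hypotheses alone is the technical heart of the argument, and I anticipate this is where an auxiliary manipulation, such as the second-order recurrence for $E_k := Q_{2k}(1)$ obtained by eliminating the odd-indexed terms, will be needed.
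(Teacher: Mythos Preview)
Your arguments for (i), (ii), (iv), and the estimate $h(2)>4$ in (v) match the paper's closely. For the exponential growth in (v) you have correctly anticipated the mechanism---the paper does eliminate the odd-indexed terms to obtain a second-order recurrence for $Q_{2n}(1)$ and then chains the two super-additivity hypotheses to produce $Q_{2n+2}(1)>2\,Q_{2n}(1)$---but you have not carried this out, so that part of the proposal is incomplete rather than wrong.

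The genuine gap is in (iii), specifically the reverse inclusion $\widehat{\mathbb{N}_0}\subseteq\mathrm{supp}\,\mu$. Your proposed route does not work. Proposition~\ref{prp:dualspace} is stated only for $h(1)<2$ and even then excludes merely the interval $(-\sqrt{(1-\epsilon)/(1+\epsilon)},\sqrt{(1-\epsilon)/(1+\epsilon)})$, which need not exhaust the central gap of $\mathrm{supp}\,\mu$. More seriously, your transfer-matrix claim that $|P_n(x)|\to\infty$ for $x$ in a spectral gap between consecutive $x_N,x_{N+1}$ is unjustified and in fact dubious near $\pm1$: for such $x$ the growth rate of $|Q_n(x)|$ becomes arbitrarily small as the gap approaches the essential spectrum, whereas $Q_n(1)$ grows at a fixed exponential rate by (v), so the ratio can tend to $0$ rather than $\infty$. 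What must be shown is that \emph{some} $|P_n(x)|$ exceeds $1$, and subordinacy gives no handle on this finitary statement. There is also a circularity: you invoke the exponential growth from (v) inside (iii) while (v) remains unproved.

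The paper's route here is substantively different. It introduces auxiliary (nonsymmetric) polynomials $R_n$ via $R_n(x^2)=P_{2n}(x)$, checks that their recurrence coefficients satisfy $a_n^R,c_n^R\to0$ and $b_n^R\to1$ (using $\lambda_{2n}\to1$, $\lambda_{2n-1}\to0$), and then invokes results of Filbir--Lasser--Szwarc on hypergroups of \emph{strong compact type} (those for which $T_n-\mathrm{id}$ is compact on $\ell^1(h)$) to conclude that the $R_n$-dual equals $\mathrm{supp}\,\mu_R$. Pulling back through $x\mapsto x^2$ gives $\widehat{\mathbb{N}_0}\subseteq\{\pm1\}\cup\{\pm x_n\}$, hence equality. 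The strict inequality $x_1>0$ is handled separately via $|P_{2n}(0)|=\prod_{k=0}^{n-1}c_{2k+1}/a_{2k+1}\to\infty$. You have not located this mechanism, and I do not see a way to make your asymptotic comparison work in its place.
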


\begin{proof}
\begin{enumerate}[(i)]
\item For every $n\in\mathbb{N}$, we compute
\begin{equation*}
Q_{n+1}(1)=Q_n(1)+\underbrace{\frac{1-\lambda_{n-1}-\lambda_n}{\lambda_n}}_{>0}Q_n(1)+\underbrace{\frac{\lambda_{n-1}}{\lambda_n}}_{>0}(Q_n(1)-Q_{n-1}(1)).
\end{equation*}
Since $Q_0(1)=1$ and $Q_1(1)=1/\lambda_0>1$, this yields the assertion.
\item As a consequence of (i), $(P_n(x))_{n\in\mathbb{N}_0}$ is well-defined. By \cite[Corollary 2 (ii)]{MS01}, $(Q_n(x))_{n\in\mathbb{N}_0}$ satisfies nonnegative linearization of products. Hence, (i) implies that $(P_n(x))_{n\in\mathbb{N}_0}$ satisfies nonnegative linearization of products, too. It is clear from the recurrence relations that $(Q_n(x))_{n\in\mathbb{N}_0}$ are the orthonormal polynomials which correspond to $(P_n(x))_{n\in\mathbb{N}_0}$.
\item As a consequence of \cite[Remark 2 p. 427]{MS01}, the difference operator $L:\ell^2\rightarrow\ell^2$,
\begin{equation*}
(L u)_n:=\begin{cases} \lambda_0u_1, & n=0, \\ \lambda_n u_{n+1}+\lambda_{n-1}u_{n-1}, & n\in\mathbb{N} \end{cases}
\end{equation*}
has norm $1$, the operator $L^2-\id$ is compact and there exists a strictly increasing sequence $(x_n)_{n\in\mathbb{N}}\subseteq[0,1)$ with $\lim_{n\to\infty}x_n=1$ and
\begin{equation*}
\mathrm{supp}\;\mu=\sigma(L)=\{\pm1\}\cup\{\pm x_n:n\in\mathbb{N}\}.
\end{equation*}
It remains to show that $\widehat{\mathbb{N}_0}=\mathrm{supp}\;\mu$ and $0\notin\widehat{\mathbb{N}_0}$. This can be seen as follows: let $(R_n(x))_{n\in\mathbb{N}_0}\subseteq\mathbb{R}[x]$ be defined by $R_n(x^2)=P_{2n}(x)$ (this approach is motivated by \cite[Section 6]{MS01}). Then $(R_n(x))_{n\in\mathbb{N}_0}$ satisfies the recurrence relation $R_0(x)=1$, $R_1(x)=(x-c_1)/a_1$,
\begin{equation*}
R_1(x)R_n(x)=a_n^R R_{n+1}(x)+b_n^R R_n(x)+c_n^R R_{n-1}(x)\;(n\in\mathbb{N}),
\end{equation*}
where
\begin{align*}
a_n^R&:=\frac{a_{2n}a_{2n+1}}{a_1}\in(0,1),\\
b_n^R&:=\frac{a_{2n}c_{2n+1}+c_{2n}a_{2n-1}-c_1}{a_1}\in(0,1),\\
c_n^R&:=\frac{c_{2n}c_{2n-1}}{a_1}\in(0,1)
\end{align*}
(as usual, $(c_n)_{n\in\mathbb{N}}\subseteq(0,1)$ and $a_n\equiv1-c_n$ shall denote the recurrence coefficients which belong to the sequence $(P_n(x))_{n\in\mathbb{N}_0}$). The estimation $b_n^R>0$ follows from the monotonicity behavior of $(\lambda_{2n})_{n\in\mathbb{N}_0}$ because
\begin{equation*}
a_{2n}c_{2n+1}+c_{2n}a_{2n-1}=\lambda_{2n}^2+\lambda_{2n-1}^2>\lambda_{2n}^2>\lambda_0^2=c_1
\end{equation*}
for every $n\in\mathbb{N}$; the remaining estimations are obvious from $a_n^R+b_n^R+c_n^R=1\;(n\in\mathbb{N})$. Since, for every $n\geq2$,
\begin{equation*}
1>c_{2n-1}=\frac{\lambda_{2n-2}^2}{a_{2n-2}}>\lambda_{2n-2}^2\to1\;(n\to\infty),
\end{equation*}
we have $\lim_{n\to\infty}c_{2n-1}=1$ (hence $\lim_{n\to\infty}a_{2n-1}=0$) and consequently
\begin{equation*}
c_{2n}=1-\frac{\lambda_{2n}^2}{c_{2n+1}}\to0\;(n\to\infty)
\end{equation*}
(hence $\lim_{n\to\infty}a_{2n}=1$). Therefore, we have
\begin{align*}
\lim_{n\to\infty}a_n^R&=0,\\
\lim_{n\to\infty}b_n^R&=1,\\
\lim_{n\to\infty}c_n^R&=0.
\end{align*}
Let $\mu_R$ denote the orthogonalization (probability) measure of $(R_n(x))_{n\in\mathbb{N}_0}$. $\mu_R$ can be regarded as pushforward measure of $\mu$. It is clear from the construction that
\begin{equation*}
\mathrm{supp}\;\mu_R=\{1\}\cup\{x_n^2:n\in\mathbb{N}\}.
\end{equation*}
By \cite[Proposition 4]{FLS05}, the behavior of the sequences $(a_n^R)_{n\in\mathbb{N}}$, $(b_n^R)_{n\in\mathbb{N}}$ and $(c_n^R)_{n\in\mathbb{N}}$ as obtained above implies that $(R_n(x))_{n\in\mathbb{N}_0}$ induces a polynomial hypergroup of `strong compact type' on $\mathbb{N}_0$, which means that the operator $T_n-\id$ is compact on $\ell^1(h)$ for every $n\in\mathbb{N}_0$.\footnote{Note that the sequence $(R_n(x))_{n\in\mathbb{N}_0}$ is not symmetric. The definitions of a polynomial hypergroup, the translation operator $T_n$, the Haar function $h$ and the space $\ell^1(h)$ are the same as for the symmetric case (as recalled in Section~\ref{sec:intro}), however.} By \cite[Theorem 2]{FLS05}, this yields
\begin{equation*}
\left\{x\in\mathbb{R}:\max_{n\in\mathbb{N}_0}|R_n(x)|=1\right\}=\mathrm{supp}\;\mu_R.
\end{equation*}
Therefore, we obtain that
\begin{align*}
\widehat{\mathbb{N}_0}&\subseteq\left\{x\in\mathbb{R}:\max_{n\in\mathbb{N}_0}|P_{2n}(x)|=1\right\}\\
&=\left\{x\in\mathbb{R}:\max_{n\in\mathbb{N}_0}|R_n(x^2)|=1\right\}\\
&=\{\pm1\}\cup\{\pm x_n:n\in\mathbb{N}\}.
\end{align*}
Since, however,
\begin{equation*}
\{\pm1\}\cup\{\pm x_n:n\in\mathbb{N}\}=\mathrm{supp}\;\mu\subseteq\widehat{\mathbb{N}_0},
\end{equation*}
we obtain equality. Moreover, we have $0\notin\widehat{\mathbb{N}_0}$ because
\begin{equation*}
|P_{2n}(0)|=\prod_{k=0}^{n-1}\frac{c_{2k+1}}{a_{2k+1}}\to\infty\;(n\to\infty)
\end{equation*}
by the limiting behavior of $(c_{2n-1})_{n\in\mathbb{N}}$ and $(a_{2n-1})_{n\in\mathbb{N}}$.
\item We first show that our explicit construction works. Since the convexity of the null sequence $(s_n)_{n\in\mathbb{N}_0}\subseteq(0,1)$ implies that $(s_n)_{n\in\mathbb{N}_0}$ is strictly decreasing, we have $(\lambda_n)_{n\in\mathbb{N}_0}\subseteq(0,1)$. Moreover, it is clear that $\lim_{n\to\infty}\lambda_{2n}=1$. Finally, by convexity we have
\begin{equation*}
\lambda_0+\lambda_1=1-s_0+s_1-s_2\leq1-s_1=\lambda_2
\end{equation*}
and, for every $n\in\mathbb{N}$,
\begin{equation*}
\lambda_{2n+1}=s_{n+1}-s_{n+2}\leq s_{n}-s_{n+1}=\lambda_{2n-1}
\end{equation*}
and
\begin{equation*}
\lambda_{2n-1}+\lambda_{2n}=s_n-s_{n+1}+1-s_n=1-s_{n+1}=\lambda_{2n+2}.
\end{equation*}
If $s_0=1-1/\sqrt{1+\epsilon}$, then (ii) yields
\begin{equation*}
h(1)=Q_1^2(1)=\frac{1}{\lambda_0^2}=\frac{1}{(1-s_0)^2}=1+\epsilon.
\end{equation*}
It remains to show that $h(1)=1+\epsilon$ implies that $(x_n)_{n\in\mathbb{N}}\subseteq[\sqrt{(1-\epsilon)/(1+\epsilon)},1)$. This is an immediate consequence of Proposition~\ref{prp:dualspace}, however.
\item As a consequence of (i) and (ii), the sequence $(h(n))_{n\in\mathbb{N}_0}$ coincides with $(Q_n^2(1))_{n\in\mathbb{N}_0}$ and is therefore strictly increasing. Hence, we have
\begin{equation*}
h(n)\geq h(2)=Q_2^2(1)=\left(\frac{1-\lambda_0^2}{\lambda_0\lambda_1}\right)^2>\left(\frac{1-\lambda_0^2}{\lambda_0(1-\lambda_0)}\right)^2=\left(1+\frac{1}{\lambda_0}\right)^2>4
\end{equation*}
for all $n\geq2$. It remains to prove that $h$ is of exponential growth. Let $n\in\mathbb{N}$. Then
\begin{equation*}
x Q_{2n}(x)=\lambda_{2n}Q_{2n+1}(x)+\lambda_{2n-1}Q_{2n-1}(x)
\end{equation*}
and consequently
\begin{align*}
x^2Q_{2n}(x)&=\lambda_{2n}x Q_{2n+1}(x)+\lambda_{2n-1}x Q_{2n-1}(x)\\
&=\lambda_{2n}\lambda_{2n+1}Q_{2n+2}(x)+(\lambda_{2n}^2+\lambda_{2n-1}^2)Q_{2n}(x)+\lambda_{2n-1}\lambda_{2n-2}Q_{2n-2}(x),
\end{align*}
so
\begin{equation*}
Q_{2n}(1)=\lambda_{2n}\lambda_{2n+1}Q_{2n+2}(1)+(\lambda_{2n}^2+\lambda_{2n-1}^2)Q_{2n}(1)+\lambda_{2n-1}\lambda_{2n-2}Q_{2n-2}(1).
\end{equation*}
The latter yields
\begin{equation*}
Q_{2n+2}(1)=\frac{1-\lambda_{2n}^2-\lambda_{2n-1}^2}{\lambda_{2n}\lambda_{2n+1}}Q_{2n}(1)-\frac{\lambda_{2n-1}\lambda_{2n-2}}{\lambda_{2n}\lambda_{2n+1}}Q_{2n-2}(1).
\end{equation*}
Since $Q_{2n}(1)>Q_{2n-2}(1)$ by (i), we get
\begin{align*}
Q_{2n+2}(1)&>\frac{1-\lambda_{2n}^2-\lambda_{2n-1}^2-\lambda_{2n-1}\lambda_{2n-2}}{\lambda_{2n}\lambda_{2n+1}}Q_{2n}(1)\\
&=\frac{1-\lambda_{2n}^2-\lambda_{2n-1}(\lambda_{2n-1}+\lambda_{2n-2})}{\lambda_{2n}\lambda_{2n+1}}Q_{2n}(1)\\
&\geq\frac{1-\lambda_{2n}^2-\lambda_{2n-1}\lambda_{2n}}{\lambda_{2n}\lambda_{2n+1}}Q_{2n}(1)\\
&=\frac{1-\lambda_{2n}(\lambda_{2n}+\lambda_{2n-1})}{\lambda_{2n}\lambda_{2n+1}}Q_{2n}(1)\\
&\geq\frac{1-\lambda_{2n}\lambda_{2n+2}}{\lambda_{2n}\lambda_{2n+1}}Q_{2n}(1)\\
&>\frac{1-\lambda_{2n}}{\lambda_{2n+1}}Q_{2n}(1)\\
&\geq\frac{1+\lambda_{2n+1}-\lambda_{2n+2}}{\lambda_{2n+1}}Q_{2n}(1)\\
&\geq\frac{1+\lambda_{2n+1}+\lambda_{2n+1}-\lambda_{2n+4}}{\lambda_{2n+1}}Q_{2n}(1)\\
&>2Q_{2n}(1).
\end{align*}
This shows that $(Q_{2n}(1))_{n\in\mathbb{N}_0}$ is of exponential growth. Therefore, we obtain that $(h(2n))_{n\in\mathbb{N}_0}=(Q_{2n}^2(1))_{n\in\mathbb{N}_0}$ (and hence $h$) is of exponential growth.
\end{enumerate}
\end{proof}

\begin{remark}
Reconsider the explicit construction studied in the proof of Theorem~\ref{thm:smallerthanoneplusepsilon2} (iv), now for $s_0\geq1-\sqrt{2}/2$. In this case, one has $h(1)\geq2$ (and consequently $h(n)\geq2$ for all $n\in\mathbb{N}$ by Theorem~\ref{thm:smallerthanoneplusepsilon2} (v)). However, the dual space $\widehat{\mathbb{N}_0}$ is a discrete subset of $[-1,1]$ by Theorem~\ref{thm:smallerthanoneplusepsilon2} (iii). Therefore, the example provides an alternative proof of Corollary~\ref{cor:smallerthanoneplusepsilon}.
\end{remark}

\section{Additional notes on the dual spaces}\label{sec:dual}

Recall that $\widehat{\mathbb{N}_0}$ can be identified with the \textit{Hermitian} structure space $\Delta_s(\ell^1(h))$. In contrast to Abelian groups, there can occur nonsymmetric characters or, in other words, $\Delta_s(\ell^1(h))$ may differ from the (full) structure space $\Delta(\ell^1(h))$. Let
\begin{equation*}
\mathcal{X}^b(\mathbb{N}_0):=\left\{z\in\mathbb{C}:\max_{n\in\mathbb{N}_0}|P_n(z)|=1\right\}.
\end{equation*}
Via the homeomorphism $\mathcal{X}^b(\mathbb{N}_0)\rightarrow\Delta(\ell^1(h))$, $z\mapsto\varphi_z$ with
\begin{equation*}
\varphi_z(f):=\sum_{k=0}^\infty f(k)\overline{P_k(z)}h(k)\;(f\in\ell^1(h)),
\end{equation*}
the (compact) set $\mathcal{X}^b(\mathbb{N}_0)$ can be identified with $\Delta(\ell^1(h))$ \cite{La83,La05}; the occurring element of $\ell^{\infty}$ which maps $k$ to $P_k(z)$ is called the character associated with $z\in\mathcal{X}^b(\mathbb{N}_0)$, and $\widehat{\mathbb{N}_0}$ is just the intersection of $\mathcal{X}^b(\mathbb{N}_0)$ with $\mathbb{R}$.\\

If $h$ is of subexponential growth, then the three dual objects $\mathrm{supp}\;\mu$, $\widehat{\mathbb{N}_0}$ and $\mathcal{X}^b(\mathbb{N}_0)$ coincide \cite{Hu70,Sz95,Vo87,Vo88}. Concerning the polynomial hypergroups considered in this paper, there are examples where $\mathrm{supp}\;\mu$, $\widehat{\mathbb{N}_0}$ and $\mathcal{X}^b(\mathbb{N}_0)$ are pairwise distinct (cosh-polynomials \cite[Sect. 6]{La05}, Karlin--McGregor polynomials for $(\alpha,\beta)\neq(2,2)$ \cite[Sect. 4]{Pe11}).\\

We now show that $\widehat{\mathbb{N}_0}=\mathcal{X}^b(\mathbb{N}_0)$ for our two example types with ``$h(1)=1+\epsilon$'' considered in Section~\ref{sec:karlinmcgregor} (recall that the analogous equality $\mathrm{supp}\;\mu=\widehat{\mathbb{N}_0}$ has already been established in Section~\ref{sec:karlinmcgregor} for both of these example types).\\

If $(P_n(x))_{n\in\mathbb{N}_0}$ is as in Theorem~\ref{thm:smallerthanoneplusepsilon}, then $\widehat{\mathbb{N}_0}=\mathcal{X}^b(\mathbb{N}_0)$ is immediate from the fact that $h$ is of quadratic (hence subexponential) growth.\\

The case $(P_n(x))_{n\in\mathbb{N}_0}$ as in Theorem~\ref{thm:smallerthanoneplusepsilon2} is more interesting because here $h$ is of exponential growth. It can be obtained from the relationship to hypergroups of strong compact type which was already used in the proof of Theorem~\ref{thm:smallerthanoneplusepsilon2}: let $(R_n(x))_{n\in\mathbb{N}_0}$, $\mu_R$ and $(x_n)_{n\in\mathbb{N}}\subseteq(0,1)$ be as in the proof of Theorem~\ref{thm:smallerthanoneplusepsilon2} (iii). Then \cite[Theorem 2]{FLS05} yields
\begin{equation*}
\left\{z\in\mathbb{C}:\max_{n\in\mathbb{N}_0}|R_n(z)|=1\right\}=\mathrm{supp}\;\mu_R=\{1\}\cup\{x_n^2:n\in\mathbb{N}\}.
\end{equation*}
Hence, we get
\begin{align*}
\mathcal{X}^b(\mathbb{N}_0)&\subseteq\left\{z\in\mathbb{C}:\max_{n\in\mathbb{N}_0}|P_{2n}(z)|=1\right\}\\
&=\left\{z\in\mathbb{C}:\max_{n\in\mathbb{N}_0}|R_n(z^2)|=1\right\}\\
&=\{\pm1\}\cup\{\pm x_n:n\in\mathbb{N}\}.
\end{align*}
In particular, $\mathcal{X}^b(\mathbb{N}_0)$ is a subset of $\mathbb{R}$, which implies the assertion.

\section{Open problems}\label{sec:openproblems}

We finish our paper with a collection of some open problems:
\begin{enumerate}[(i)]
\item Is $h(2)\geq2$ always true?
\item Is $\liminf_{n\to\infty}h(n)\geq2$ always true?
\item Is $h(n)\geq2\;(n\in\mathbb{N}\backslash\{1\})$ always true?
\item Is $h(1)\geq2$ sufficient for $h(2)\geq2$, $\liminf_{n\to\infty}h(n)\geq2$ or $h(n)\geq2\;(n\in\mathbb{N})$?
\item Is $0\in\widehat{\mathbb{N}_0}$ sufficient for $h(2)\geq2$, $\liminf_{n\to\infty}h(n)\geq2$ or $h(n)\geq2\;(n\in\mathbb{N})$?
\end{enumerate}
The questions (i), (ii) and (iii) are motivated by our observations made in Theorem~\ref{thm:smallerthanoneplusepsilon} (ii) and Theorem~\ref{thm:smallerthanoneplusepsilon2} (v). Concerning (iv) and (v), recall that $0\in\widehat{\mathbb{N}_0}$ implies at least $h(1)\geq2$, see the beginning of Section~\ref{sec:karlinmcgregor}.

\appendix

\section{On the orthogonalization measure of the Karlin--McGregor polynomials}

As announced in Section~\ref{sec:karlinmcgregor}, we sketch a proof of equations \eqref{eq:measureKarlinMcGregorabscont} to \eqref{eq:measureKarlinMcGregordiscr}, which also corrects a small mistake in Karlin and McGregor's paper \cite{KM59}. We first note the following simple observation: let $(P_n(x))_{n\in\mathbb{N}_0}$ be an (arbitrary) orthogonal polynomial sequence as in Section~\ref{sec:intro}, and let $(P_n^{\ast}(x))_{n\in\mathbb{N}_0}\subseteq\mathbb{R}[x]$ be the orthogonal polynomial sequence which corresponds to the probability measure
\begin{equation*}
\mathrm{d}\mu^{\ast}(x):=\frac{1}{a_1}(1-x^2)\,\mathrm{d}\mu(x).
\end{equation*}
Moreover, let $(\sigma_n(x))_{n\in\mathbb{N}_0},(\sigma_n^{\ast}(x))_{n\in\mathbb{N}_0}\subseteq\mathbb{R}[x]$ denote the corresponding monic versions. Then
\begin{equation}\label{eq:bikernelpol}
(1-x^2)\sigma_n^{\ast}(x)=-\sigma_{n+2}(x)+\frac{\sigma_{n+2}(1)}{\sigma_n(1)}\sigma_n(x)
\end{equation}
for all $n\in\mathbb{N}_0$. This is immediate from the relation
\begin{equation*}
\int_{\mathbb{R}}\!(1-x^2)\sigma_n^{\ast}(x)\sigma_k(x)\,\mathrm{d}\mu(x)=a_1\int_{\mathbb{R}}\!\sigma_n^{\ast}(x)\sigma_k(x)\,\mathrm{d}\mu^{\ast}(x)\;(n,k\in\mathbb{N}_0)
\end{equation*}
which, due to orthogonality and symmetry, yields that $(1-x^2)\sigma_n^{\ast}(x)$ has to be a linear combination of $\sigma_{n+2}(x)$ and $\sigma_n(x)$. Concerning the division in \eqref{eq:bikernelpol}, we note that it is clear from Section~\ref{sec:intro} that $\sigma_n(1)>0$ for all $n\in\mathbb{N}_0$.\\

From now on, let $\alpha,\beta\geq2$ and $(P_n(x))_{n\in\mathbb{N}_0}$ be the sequence of Karlin--McGregor polynomials $(K_n^{(\alpha,\beta)}(x))_{n\in\mathbb{N}_0}$. Moreover, let $(\widetilde{P_n(x)})_{n\in\mathbb{N}_0}$ be the sequence given by the modified recurrence relation $\widetilde{P_0(x)}=1$, $\widetilde{P_1(x)}=\beta x/(\beta-1)$,
\begin{equation}\label{eq:modifiedrecurrenceKarlinMcGregor}
x\widetilde{P_n(x)}=a_n\widetilde{P_{n+1}(x)}+c_n\widetilde{P_{n-1}(x)}\;(n\in\mathbb{N}),
\end{equation}
where $(c_n)_{n\in\mathbb{N}}\subseteq(0,1)$ and $a_n\equiv1-c_n$ are the Karlin--McGregor coefficients as in Section~\ref{sec:karlinmcgregor} (so $c_{2n-1}=1/\alpha$, $c_{2n}=1/\beta$). These are the polynomials studied in \cite{KM59}, and up to the aforementioned mistake\footnote{The formula for $\psi^{\prime}(x)$ on \cite[p. 74]{KM59} has to be corrected; the right formula reads $\psi^{\prime}(x)=\sqrt{4p_1q x^2-(x^2-p q_1+p_1q)^2}/(2\pi p_1q|x|)$.} it was shown there that $(\widetilde{P_n(x)})_{n\in\mathbb{N}_0}$ is orthogonal w.r.t. the probability measure $\widetilde{\mu}$ satisfying the following properties: if $\alpha\leq\beta$, then $\widetilde{\mu}$ is absolutely continuous with
\begin{equation}\label{eq:measureKarlinMcGregorabscontmod}
\mathrm{d}\widetilde{\mu}(x)=\begin{cases} \frac{\alpha\beta\sqrt{(\gamma_1^2-x^2)(x^2-\gamma_2^2)}}{2\pi(\alpha-1)|x|}\chi_{(-\gamma_1,-\gamma_2)\cup(\gamma_2,\gamma_1)}(x)\,\mathrm{d}x, & \alpha<\beta, \\ \frac{\alpha^2\sqrt{\gamma_1^2-x^2}}{2\pi(\alpha-1)}\chi_{(-\gamma_1,\gamma_1)}(x)\,\mathrm{d}x, & \alpha=\beta, \end{cases}
\end{equation}
where $\gamma_1$ and $\gamma_2$ are as in Section~\ref{sec:karlinmcgregor}. If $\alpha>\beta$, however, then $\widetilde{\mu}$ consists of the absolutely continuous part
\begin{equation}\label{eq:measureKarlinMcGregorabscontpartmod}
\frac{\alpha\beta\sqrt{(\gamma_1^2-x^2)(x^2-\gamma_2^2)}}{2\pi(\alpha-1)|x|}\chi_{(-\gamma_1,-\gamma_2)\cup(\gamma_2,\gamma_1)}(x)\,\mathrm{d}x
\end{equation}
and the discrete part
\begin{equation}\label{eq:measureKarlinMcGregordiscrmod}
\widetilde{\mu}(\{0\})=\frac{\alpha-\beta}{\alpha-1}.
\end{equation}
It is now left to deduce \eqref{eq:measureKarlinMcGregorabscont} to \eqref{eq:measureKarlinMcGregordiscr} from \eqref{eq:measureKarlinMcGregorabscontmod} to \eqref{eq:measureKarlinMcGregordiscrmod}, which can be done as follows: first, show that the monic versions $(\sigma_n(x))_{n\in\mathbb{N}_0}$ and $(\widetilde{\sigma_n(x)})_{n\in\mathbb{N}_0}$ satisfy the recurrence relations $\sigma_0(x)=\widetilde{\sigma_0(x)}=1$, $\sigma_1(x)=\widetilde{\sigma_1(x)}=x$,
\begin{align*}
x\sigma_n(x)&=\sigma_{n+1}(x)+\lambda_n\sigma_{n-1}(x)\;(n\in\mathbb{N}),\\
x\widetilde{\sigma_n(x)}&=\widetilde{\sigma_{n+1}(x)}+\widetilde{\lambda_n}\widetilde{\sigma_{n-1}(x)}\;(n\in\mathbb{N}),
\end{align*}
where
\begin{equation*}
\lambda_n:=\begin{cases} \frac{1}{\alpha}, & n=1, \\ \frac{\alpha-1}{\alpha\beta}, & n\;\mbox{even}, \\ \frac{\beta-1}{\alpha\beta}, & \mbox{else} \end{cases}
\end{equation*}
and
\begin{equation*}
\widetilde{\lambda_n}=\begin{cases} \frac{\beta-1}{\alpha\beta}, & n=1, \\ \lambda_n, & \mbox{else}. \end{cases}
\end{equation*}
Concerning $(\sigma_n(x))_{n\in\mathbb{N}_0}$, this follows from \eqref{eq:threetermrecmonic}; concerning $(\widetilde{\sigma_n(x)})_{n\in\mathbb{N}_0}$, just calculate the leading coefficients of the polynomials $\widetilde{P_n(x)}$ from \eqref{eq:modifiedrecurrenceKarlinMcGregor} and the initial conditions. Next, use these recurrence relations to show that
\begin{equation*}
(1-x^2)\widetilde{\sigma_0(x)}=-\sigma_2(x)+\frac{\alpha-1}{\alpha}\sigma_0(x)
\end{equation*}
and
\begin{equation*}
(1-x^2)\widetilde{\sigma_n(x)}=-\sigma_{n+2}(x)+\frac{(\alpha-1)(\beta-1)}{\alpha\beta}\sigma_n(x)\;(n\in\mathbb{N})
\end{equation*}
via induction. Finally, use induction to show that
\begin{equation*}
\frac{\sigma_{n+2}(1)}{\sigma_n(1)}=\frac{\sigma_{n+1}(1)}{\sigma_n(1)}-\lambda_{n+1}=\begin{cases} \frac{\alpha-1}{\alpha}, & n=0, \\ \frac{(\alpha-1)(\beta-1)}{\alpha\beta}, & n\in\mathbb{N}. \end{cases}
\end{equation*}
Putting all together, we see that $(\widetilde{\sigma_n(x)})_{n\in\mathbb{N}_0}$ coincides with the sequence $(\sigma_n^{\ast}(x))_{n\in\mathbb{N}_0}$. It is well-known that if the support of an orthogonalization measure is compact, then it is the \textit{unique} orthogonalization measure of its orthogonal polynomial sequence \cite{Ch78}. Therefore, we can conclude that $\widetilde{\mu}=\mu^{\ast}$, which yields the desired formulas for $\mu$.

\bibliography{jointworkrysiekstefan}
\bibliographystyle{amsplain}

\end{document}